\documentclass{article}
\usepackage{amsfonts}
\usepackage{mathrsfs}
\usepackage{amsthm}
\usepackage{amssymb}
\usepackage{amsmath}
\usepackage{enumerate}

\textwidth 148mm \textheight 230mm
\setlength{\oddsidemargin}{0.636cm}
\setlength{\evensidemargin}{0.636cm}
\topmargin 0pt

\theoremstyle{plain}
\newtheorem{theorem}{Theorem}[section]

\newtheorem{corollary}[theorem]{Corollary}
\newtheorem{lemma}[theorem]{Lemma}

\theoremstyle{definition}
\newtheorem{definition}{Definition}[section]

\theoremstyle{remark}
\newtheorem{remark}{\textbf{Remark}}[section]

\theoremstyle{example}
\newtheorem{example}{Example}[section]

\numberwithin{equation}{section}

\title{Higher-Dimensional Open Quantum Walk Constructed from Quantum Bernoulli Noises}
\author{Ce Wang\\
School of Mathematics and Statistics \& Center for Mathematical Sciences\\
Huazhong University of Science and Technology\\
Wuhan 430074, People's Republic of China}
\date{}
\begin{document}
\maketitle

\noindent\textbf{Abstract.}\ \
Quantum Bernoulli noises are annihilation and creation operators acting on Bernoulli functionals,
which satisfy the canonical anti-commutation relations (CAR) in equal-time.
In this paper, we use quantum Bernoulli noises to introduce a model of open quantum walk on the $d$-dimensional
integer lattice $\mathbb{Z}^d$ for a general positive integer $d\geq 2$, which we call the $d$-dimensional open QBN walk.
We obtain a quantum channel representation of the $d$-dimensional open QBN walk,
and find that it admits the ``separability-preserving'' property.
We prove that, for a wide range of choices of its initial state,
the $d$-dimensional open QBN walk has a limit probability distribution of $d$-dimensional Gauss type.
Finally we unveil links between the $d$-dimensional open QBN walk and the unitary quantum walk recently introduced in
[Ce Wang and Caishi Wang, Higher-dimensional quantum walk in terms of quantum Bernoulli noises, Entropy 2020, 22, 504].
\vskip 2mm

\noindent\textbf{Keywords}\ \  Open quantum walk; Quantum Bernoulli noises;  Quantum probability.
\vskip 2mm

\noindent\textbf{Mathematics Subject Classification}\ \  81S25; 81S22

\section{Introduction}

Open quantum walks (OQW for short) are also known as open quantum random walks,
which are quantum analogs of classical Markov chains in probability theory.
As a new type of quantum walks, OQWs are finding application in the generalisations of the theory of quantum probability,
and have potential application in dissipative quantum computation, quantum state engineering
and transport in mesoscopic systems \cite{sinayskiy}.

Unlike unitary quantum walks (UQW for short), which have been well studied (see \cite{kemp,konno-1, venegas} and references therein),
the dynamics of OQWs are non-unitary due to the effects of the local environments.
Indeed, OQWs, step by step, describe typically quantum behaviors, but seem to show up a rather classical asymptotic behavior \cite{attal-2}.

The first model of OQW was introduced by Attal, Petruccione, Sabot and Sinayskiy
in 2012 (see \cite{attal-1} for details). Since then, much attention has been paid to OQWs.
Attal \textit{et al}  \cite{attal-2} established the central limit theorem (CLT)
for a class of homogeneous OQWs on $\mathbb{Z}^d$ with a unique invariant state.
Konno and Yoo  \cite{konno-2} applied the CLT to the study of limit probability distributions for various OQWs.
Sadowski and Pawela \cite{sadowsk} investigated a generalisation of the CLT for the case of nonhomogenous OQWs.
Sinayskiy and Petruccione examined the properties of OQWs on the $1$-dimensional integer lattice $\mathbb{Z}$ for the case
of simultaneously diagonalisable transition operators \cite{sinayskiy-0}.
Carbone and Pautrat \cite{carbone}, from a perspective of classical Markov chain,
introduced notions of irreducibility, period, communicating classes for OQWs.
Lardizabal \cite{lardizabal} defined a notion of hitting time for OQWs and obtained some useful
formulas for certain cases. There are many other researches on OQWs
(see the recent survey article \cite{sinayskiy} by Sinayskiy and Petruccione and references therein).

From a physical point of view, OQWs are quantum walks where the transitions between the sites
(or vertices) are driven by the interaction with an environment, which can
cause dissipation and decoherence. This suggests that the effects of an environment play an important role
in the time evolution of the involved OQW.
On the other hand, quantum Bernoulli noises \cite{w-c-l} have turned out to be an alternative approach to the environment of
an open quantum system (see, e.g. \cite{w-t-r} and references therein).
It is then natural to apply quantum Bernoulli noises to the study of OQWs.

In 2018, by using quantum Bernoulli noises, Wang \textit{et al} \cite{w-w-r-t} introduced
a model of OQW on the $1$-dimensional integer lattice $\mathbb{Z}$,
which we call \textbf{the $1$-dimensional open QBN walk} below. In this paper,
we would like to extend the $1$-dimensional open QBN walk to a higher-dimensional case.
More specifically, for a general integer $d\geq 2$, we will introduce a model of OQW
on the $d$-dimensional integer lattice $\mathbb{Z}^d$ in terms of quantum Bernoulli noises
and examine its dynamical behavior from a perspective of probability distribution.
Our main work is as follows. Let $\mathcal{H}$ be the space of square integrable Bernoulli functionals and
$\mathscr{T}^{(d)}(\mathcal{H})$ the set of $d$-dimensional nucleuses on $\mathcal{H}$
(see Definition~\ref{def-2-2} for its exact meaning).
\begin{itemize}
  \item By using quantum Bernoulli noises, we construct a sequence of mappings
        $\mathfrak{J}_n^{(d)}$, $n\geq 0$, on $\mathscr{T}^{(d)}(\mathcal{H})$
         to describe the change in the internal degree of freedom of the walker.
  \item With the above-mentioned mappings $\mathfrak{J}_n^{(d)}$, $n\geq 0$, as the main tool, we establish our model of OQW
        on the $d$-dimensional integer lattice $\mathbb{Z}^d$, which we call \textbf{the $d$-dimensional open QBN walk}.
  \item We obtain a quantum channel representation of the $d$-dimensional open QBN walk and find
        that it admits the ``separability-preserving'' property.
  \item We prove that, for a wide range of choices of its initial state, the $d$-dimensional open QBN walk has
        a limit probability distribution of $d$-dimensional Gauss type.
  \item We unveil links between the $d$-dimensional open QBN walk and the unitary quantum walk recently introduced in \cite{w-w},
        which was called the $d$-dimensional QBN walk therein.
\end{itemize}
Some other interesting results are also proven.

The paper is organized as follows. In Subsection~\ref{subsec-2-1}, we briefly recall some necessary notions
and facts about quantum Bernoulli noises.
Subsections~\ref{subsec-2-2}, \ref{subsec-2-3} and \ref{subsec-2-4} are one part of our main work,
which includes several technical theorems we prove, the definition of our model of OQW (namely the $d$-dimensional open QBN walk)
and its quantum channel representation, among others.
Another part of our main work lies in Sections~\ref{sec-3} and~\ref{sec-4}, where we show that
the walk admits the ``separability-preserving'' property,
we prove that, for a wide range of choices of its initial state, the walk has
a limit probability distribution of $d$-dimensional Gauss type,
and finally we show links between the $d$-dimensional open QBN walk and the unitary quantum walk recently introduced in \cite{w-w}.

Throughout this paper, $\mathbb{Z}$ always denotes the set of all integers, while
$\mathbb{N}$ means the set of all nonnegative integers. For a positive integer $d\geq 2$, we use $\mathbb{Z}^d$ to mean
the $d$-dimensional integer lattice.
We denote by $\Gamma$ the finite power set of $\mathbb{N}$, namely
\begin{equation}\label{eq-1-1}
    \Gamma
    = \{\,\sigma \mid \text{$\sigma \subset \mathbb{N}$ and $\#\,\sigma < \infty$} \,\},
\end{equation}
where $\#\sigma$ means the cardinality of $\sigma$.
If $\mathcal{X}$ is a Hilbert space and $d\geq 2$ a positive integer, then $\mathcal{X}^{\otimes n}$ denotes the $d$-fold tensor product of $\mathcal{X}$. We denote by $\mathfrak{B}(\mathcal{X})$ the set of all bounded linear operators on $\mathcal{X}$.
As usual, a density operator on a Hilbert space means a positive operator of trace class with unit trace on that space.
By convention, $\mathrm{Tr}A$ denotes the trace of an operator of trace class $A$.
Unless otherwise stated, letters like $j$, $k$ and $n$ stand for nonnegative integers, namely elements of $\mathbb{N}$.

\section{Definition of walk and its basic properties}\label{sec-2}

In this section, we first define our model of OQW (open quantum walk) on the $d$-dimensional integer lattice $\mathbb{Z}^d$
by using quantum Bernoulli noises, which we will call the $d$-dimensional open QBN walk,
and then we examine its basic properties.

\subsection{Quantum Bernoulli noises}\label{subsec-2-1}

We first briefly recall some necessary notions and facts about quantum Bernoulli noises.
We refer to \cite{w-c-l} for details about quantum Bernoulli noises.

Let $\Omega$ be the set of all functions $f\colon \mathbb{N} \mapsto \{-1,1\}$, and
$(\zeta_n)_{n\geq 0}$ the sequence of canonical projections on $\Omega$ given by
\begin{equation}\label{eq-2-1}
    \zeta_n(f)=f(n),\quad f\in \Omega.
\end{equation}
Let $\mathscr{F}$ be the $\sigma$-field on $\Omega$ generated by the sequence $(\zeta_n)_{n\geq 0}$,
and $(p_n)_{n\geq 0}$ a given sequence of positive numbers with the property that $0 < p_n < 1$ for all $n\geq 0$.
Then there exists a unique probability measure $\mathbb{P}$ on $\mathscr{F}$ such that
\begin{equation}\label{eq-2-2}
\mathbb{P}\circ(\zeta_{n_1}, \zeta_{n_2}, \cdots, \zeta_{n_k})^{-1}\big\{(\epsilon_1, \epsilon_2, \cdots, \epsilon_k)\big\}
=\prod_{j=1}^k p_j^{\frac{1+\epsilon_j}{2}}(1-p_j)^{\frac{1-\epsilon_j}{2}}
\end{equation}
for $n_j\in \mathbb{N}$, $\epsilon_j\in \{-1,1\}$ ($1\leq j \leq k$) with $n_i\neq n_j$ when $i\neq j$
and $k\in \mathbb{N}$ with $k\geq 1$. Thus one has a probability measure space $(\Omega, \mathscr{F}, \mathbb{P})$,
which is referred to as the Bernoulli space and random variables on it are known as Bernoulli functionals.

Let $Z=(Z_n)_{n\geq 0}$ be the sequence of Bernoulli functionals generated by sequence $(\zeta_n)_{n\geq 0}$, namely
\begin{equation}\label{eq-2-3}
   Z_n = \frac{\zeta_n + q_n - p_n}{2\sqrt{p_nq_n}},\quad n\geq0,
\end{equation}
where $q_n = 1-p_n$. Clearly $Z=(Z_n)_{n\geq 0}$ is an independent sequence of random variables on the
probability measure space $(\Omega, \mathscr{F}, \mathbb{P})$.
Let $\mathcal{H}$ be the space of square integrable complex-valued Bernoulli functionals, namely
\begin{equation}\label{eq-2-4}
  \mathcal{H} = L^2(\Omega, \mathscr{F}, \mathbb{P}).
\end{equation}
We denote by
$\langle\cdot,\cdot\rangle$ the usual inner product of the space $\mathcal{H}$, and by $\|\cdot\|$ the corresponding norm.
It is known that $Z$ has the chaotic representation property.
Thus $\mathfrak{Z} = \{Z_{\sigma}\mid \sigma \in \Gamma\}$  form an orthonormal basis (ONB) of $\mathcal{H}$,
which is known as the canonical ONB of $\mathcal{H}$.
Here $Z_{\emptyset}=1$ and
\begin{equation}\label{eq-2-5}
    Z_{\sigma} = \prod_{j\in \sigma}Z_j,\quad \text{$\sigma \in \Gamma$, $\sigma \neq \emptyset$}.
\end{equation}
Clearly $\mathcal{H}$ is infinite-dimensional as a complex Hilbert space.

It can be shown that \cite{w-c-l}, for each $k\in \mathbb{N}$, there exists a bounded operator $\partial_k$ on
$\mathcal{H}$ such that
\begin{equation}\label{eq-2-6}
    \partial_k Z_{\sigma} = \mathbf{1}_{\sigma}(k)Z_{\sigma\setminus k},\quad \partial_k^{\ast} Z_{\sigma}
    = [1-\mathbf{1}_{\sigma}(k)]Z_{\sigma\cup k}\quad
    \sigma \in \Gamma,
    \sigma \in \Gamma,
\end{equation}
where $\partial_k^{\ast}$ denotes the adjoint of $\partial_k$, $\sigma\setminus k=\sigma\setminus \{k\}$, $\sigma\cup k=\sigma\cup \{k\}$
and $\mathbf{1}_{\sigma}(k)$ the indicator of $\sigma$ as a subset of $\mathbb{N}$.

The operators $\partial_k$ and $\partial_k^{\ast}$ are usually known as the annihilation and
creation operators acting on Bernoulli functionals, respectively.
And the family $\{\partial_k, \partial_k^{\ast}\}_{k \geq 0}$ is referred to as \textbf{quantum Bernoulli noises}.

A typical property of quantum Bernoulli noises is that they satisfy
the canonical anti-commutation relations (CAR) in equal-time \cite{w-c-l}.
More specifically, for $k$, $l\in \mathbb{N}$, it holds true that
\begin{equation}\label{eq-2-7}
    \partial_k \partial_l = \partial_l\partial_k,\quad
    \partial_k^{\ast} \partial_l^{\ast} = \partial_l^{\ast}\partial_k^{\ast},\quad
    \partial_k^{\ast} \partial_l = \partial_l\partial_k^{\ast}\quad (k\neq l)
\end{equation}
and
\begin{equation}\label{eq-2-8}
   \partial_k\partial_k= \partial_k^{\ast}\partial_k^{\ast}=0,\quad
   \partial_k\partial_k^{\ast} + \partial_k^{\ast}\partial_k=I,
\end{equation}
where $I$ is the identity operator on $\mathcal{H}$.

For a nonnegative integer $n\geq 0$, one can define, respectively, two self-adjoint operators $L_n$ and $R_n$ on $\mathcal{H}$ in the following manner
\begin{equation}\label{eq-2-9}
L_n = \frac{1}{2}(\partial_n^* + \partial_n -I),\quad R_n=\frac{1}{2}(\partial_n^* + \partial_n +I).
\end{equation}
It then follows from the properties of quantum Bernoulli noises that the operators $L_n$, $R_n$, $n\geq 0$, form a commutative family, namely
\begin{equation}\label{eq-2-10}
  L_kL_l = L_lL_k,\quad R_kL_l = L_lR_k,\quad R_kR_l = R_lR_k,\quad k,\, l\geq 0.
\end{equation}

\begin{lemma}\label{lem-2-3}\cite{w-w-r-t}
For all $n\geq 0$, operators $L_n$ and $R_n$ admit the following operational properties
\begin{equation}\label{eq-2-11}
L_n^2=-L_n,\quad L_nR_n=R_n L_n=0,\quad R_n^2=R_n,\quad L_n^2 + R_n^2=I.
\end{equation}
\end{lemma}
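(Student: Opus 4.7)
The plan is to prove all four identities by a single reduction: observe that both $L_n$ and $R_n$ are affine combinations of the self-adjoint operator $A_n := \partial_n^* + \partial_n$ and the identity, namely $L_n = \tfrac{1}{2}(A_n - I)$ and $R_n = \tfrac{1}{2}(A_n + I)$. So everything boils down to computing $A_n^2$ and then using the factoring identities $(A_n \pm I)^2 = A_n^2 \pm 2A_n + I$ and $(A_n - I)(A_n + I) = A_n^2 - I$.

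First I would compute $A_n^2$. Expanding, $A_n^2 = (\partial_n^* + \partial_n)^2 = \partial_n^*\partial_n^* + \partial_n^*\partial_n + \partial_n\partial_n^* + \partial_n\partial_n$. The nilpotency relations $\partial_n\partial_n = \partial_n^*\partial_n^* = 0$ from \eqref{eq-2-8} kill the outer two terms, and the CAR relation $\partial_n\partial_n^* + \partial_n^*\partial_n = I$ collapses the middle two to $I$. Hence $A_n^2 = I$.

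With this in hand, the four identities follow immediately. For $L_n^2$, one has $L_n^2 = \tfrac{1}{4}(A_n - I)^2 = \tfrac{1}{4}(A_n^2 - 2A_n + I) = \tfrac{1}{4}(2I - 2A_n) = -\tfrac{1}{2}(A_n - I) = -L_n$. For $R_n^2$, symmetrically $R_n^2 = \tfrac{1}{4}(A_n + I)^2 = \tfrac{1}{4}(2I + 2A_n) = R_n$. For the cross products, $L_n R_n = \tfrac{1}{4}(A_n - I)(A_n + I) = \tfrac{1}{4}(A_n^2 - I) = 0$, and since $A_n - I$ and $A_n + I$ are polynomials in $A_n$ they commute, giving $R_n L_n = 0$ as well. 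Finally, $L_n^2 + R_n^2 = -L_n + R_n = \tfrac{1}{2}[(A_n + I) - (A_n - I)] = I$.

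There is no real obstacle here; the only point that requires care is invoking the correct CAR relations from \eqref{eq-2-8} in the right order. The proof is essentially a one-line computation once one recognizes $A_n^2 = I$, i.e., that $\partial_n + \partial_n^*$ is a self-adjoint involution, from which $L_n$ and $R_n$ emerge as (signed) spectral projections onto the $\pm 1$-eigenspaces of $A_n$—a viewpoint that makes the four stated relations geometrically transparent.
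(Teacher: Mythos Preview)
Your proof is correct. Note, however, that the paper does not supply its own proof of this lemma: it is quoted from \cite{w-w-r-t} and stated without argument. So there is nothing in the present paper to compare your approach against. Your computation via the self-adjoint involution $A_n = \partial_n^* + \partial_n$ (so that $A_n^2 = I$ by \eqref{eq-2-8}) is clean and makes the four identities fall out of elementary polynomial algebra; the spectral-projection interpretation you mention at the end is also a nice way to see why $-L_n$ and $R_n$ behave as orthogonal idempotents summing to $I$.
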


\subsection{Technical theorems}\label{subsec-2-2}

In this subsection, we prove some technical theorems, which will be used in defining our model of OQW
and examining its properties.

In what follows, we always assume that $d\geq 2$ is a given positive integer and $\Lambda =\{-1,\, +1\}$.
We denote by $\Lambda^d$ the $d$-fold cartesian product of $\Lambda$,
and by $\mathcal{H}^{\otimes d}$ the  $d$-fold tensor product space of $\mathcal{H}$.
In addition, we assume that $\mathsf{K}\colon \mathcal{H}^{\otimes d}\rightarrow \mathcal{H}$ is a fixed unitary isomorphism.
Such a unitary isomorphism does exist because $\mathcal{H}$ is infinite-dimensional and separable.

To facilitate our discussions, we further write $\mathfrak{S}(\mathcal{H})$ for the space of all operators
of trace class on $\mathcal{H}$ with the trace norm and $\mathfrak{S}_+(\mathcal{H})$ for the cone of all positive elements of $\mathfrak{S}(\mathcal{H})$.
Similarly, we use symbol $\mathfrak{S}\big(\mathcal{H}^{\otimes d}\big)$ and $\mathfrak{S}_+\big(\mathcal{H}^{\otimes d}\big)$.

\begin{definition}\label{def-2-1}
For $n\geq 0$ and $\varepsilon=(\varepsilon_1, \varepsilon_2, \cdots, \varepsilon_d)\in \Lambda^d$, we define
\begin{equation}\label{eq-2-12}
  C_n^{(\varepsilon)} = \mathsf{K}\Big(\bigotimes_{j=1}^d B_n^{(\varepsilon_j)}\Big)\mathsf{K}^{-1},
\end{equation}
where $\mathsf{K}^{-1}$ is the inverse of the unitary isomorphism $\mathsf{K}\colon \mathcal{H}^{\otimes d}\rightarrow \mathcal{H}$,
and $B_n^{(\varepsilon_j)}$ is given by
\begin{equation}\label{eq-2-13}
  B_n^{(\varepsilon_j)}=
  \left\{
    \begin{array}{ll}
      L_n, & \hbox{$\varepsilon_j =-1$;} \\
      R_n, & \hbox{$\varepsilon_j =+1$}
    \end{array}
  \right.
\end{equation}
for $i=1$, $2$, $\cdots$, $d$.
\end{definition}

It can be shown that, for all $n\geq 0$, $\big\{C_n^{(\varepsilon)}\mid \varepsilon\in \Lambda^d\big\}$ are self-adjoint
operators on $\mathcal{H}$. And moreover, they admit the following useful properties:\ $C_n^{(\varepsilon)}C_n^{(\varepsilon')}=0$
for $\varepsilon$, $\varepsilon'\in \Lambda^d$ with $\varepsilon\neq \varepsilon'$; and their sum
$\sum_{\varepsilon\in \Lambda^d}C_n^{(\varepsilon)}$ is a unitary operator on $\mathcal{H}$ (see \cite{w-w} for details).

\begin{theorem}\label{thr-2-4}
Let $n\geq 0$. Then $\sum_{\varepsilon\in \Lambda^d}C_n^{(\varepsilon)}C_n^{(\varepsilon)}=I$, where $I$ denotes the identity operator on $\mathcal{H}$.
\end{theorem}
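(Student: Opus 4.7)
The plan is to reduce the identity to a purely tensor-product computation and then invoke Lemma~\ref{lem-2-3}. The conjugating isomorphism $\mathsf{K}$ does essentially no work; the content is the single-site identity $L_n^2+R_n^2=I$ on $\mathcal{H}$ together with the distributive law for the tensor product of operators.

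First I would simplify $C_n^{(\varepsilon)}C_n^{(\varepsilon)}$. Because $\mathsf{K}^{-1}\mathsf{K}$ is the identity on $\mathcal{H}^{\otimes d}$ and the tensor product is multiplicative in each slot, direct substitution of \eqref{eq-2-12} gives
\begin{equation*}
  C_n^{(\varepsilon)}C_n^{(\varepsilon)}
  = \mathsf{K}\Bigl(\bigotimes_{j=1}^d B_n^{(\varepsilon_j)}\Bigr)\mathsf{K}^{-1}\mathsf{K}\Bigl(\bigotimes_{j=1}^d B_n^{(\varepsilon_j)}\Bigr)\mathsf{K}^{-1}
  = \mathsf{K}\Bigl(\bigotimes_{j=1}^d \bigl(B_n^{(\varepsilon_j)}\bigr)^2\Bigr)\mathsf{K}^{-1}.
\end{equation*}
Summing over $\varepsilon\in\Lambda^d$ and pulling the unitary conjugation out of the sum, the statement reduces to showing
\begin{equation*}
  \sum_{\varepsilon\in \Lambda^d}\bigotimes_{j=1}^d \bigl(B_n^{(\varepsilon_j)}\bigr)^2 \;=\; I_{\mathcal{H}^{\otimes d}}.
\end{equation*}

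Next I would exploit the product structure of $\Lambda^d=\Lambda\times\cdots\times\Lambda$. Since summation over $\Lambda^d$ factors into $d$ independent binary sums and tensor product is bilinear, the left-hand side equals $\bigotimes_{j=1}^d\bigl(L_n^2+R_n^2\bigr)$, using \eqref{eq-2-13}. By the last identity of Lemma~\ref{lem-2-3}, each factor equals $I$ on $\mathcal{H}$, so the tensor product is the identity on $\mathcal{H}^{\otimes d}$. Conjugating by the unitary $\mathsf{K}$ then yields the identity on $\mathcal{H}$, completing the argument.

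I do not foresee any genuine obstacle: the whole proof is a bookkeeping exercise combining the tensor-product definition \eqref{eq-2-12}, bilinearity of $\otimes$, and the single-site relation $L_n^2+R_n^2=I$. The only point that warrants a line of justification is the distribution of the sum over the tensor product, which is immediate from bilinearity but should be written out for clarity.
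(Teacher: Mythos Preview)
Your proposal is correct and follows essentially the same approach as the paper's proof: both reduce the claim to the tensor-product identity $\sum_{\varepsilon\in\Lambda^d}\bigotimes_{j=1}^d (B_n^{(\varepsilon_j)})^2 = \bigotimes_{j=1}^d(L_n^2+R_n^2)=I_{\mathcal{H}^{\otimes d}}$ via Lemma~\ref{lem-2-3}, with the unitary conjugation by $\mathsf{K}$ playing a purely cosmetic role. The only cosmetic difference is that you first simplify each $C_n^{(\varepsilon)}C_n^{(\varepsilon)}$ and then sum, whereas the paper first handles the tensor sum and then conjugates by $\mathsf{K}$.
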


\begin{proof}
By the definition of $B_n^{-1}$ and $B_n^{(+1)}$ and Lemma~\ref{lem-2-3}, we have
\begin{equation*}
  \sum_{\varepsilon_j\in \Lambda}B_n^{(\varepsilon_j)}B_n^{(\varepsilon_j)}
  = B_n^{(-1)}B_n^{(-1)} + B_n^{(+1)}B_n^{(+1)}
  = L_n^2 + R_n^2
  = I
\end{equation*}
for $j=1$, $2$, $\cdots$, $d$, where $I$ denotes the identity operator on $\mathcal{H}$.
Making tensor products gives
\begin{equation*}
  \sum_{\varepsilon\in \Lambda^d}\Big(\bigotimes_{j=1}^d B_n^{(\varepsilon_j)}\Big)\Big(\bigotimes_{j=1}^d B_n^{(\varepsilon_j)}\Big)
  = \sum_{\varepsilon\in \Lambda^d}\Big(\bigotimes_{j=1}^d B_n^{(\varepsilon_j)}B_n^{(\varepsilon_j)}\Big)
  = \bigotimes_{j=1}^d\sum_{\varepsilon_j\in \Lambda}B_n^{(\varepsilon_j)}B_n^{(\varepsilon_j)}
  = \bigotimes_{j=1}^dI.
\end{equation*}
This, together with the definition of $C_n^{(\varepsilon)}$ as well as properties of the unitary isomorphism $\mathsf{K}$, yields
\begin{equation*}
  \sum_{\varepsilon\in \Lambda^d}C_n^{(\varepsilon)}C_n^{(\varepsilon)}
  = \mathsf{K}\Big[\sum_{\varepsilon\in \Lambda^d}\Big(\bigotimes_{j=1}^d B_n^{(\varepsilon_j)}\Big)
    \Big(\bigotimes_{j=1}^d B_n^{(\varepsilon_j)}\Big)\Big]\mathsf{K}^{-1}
  = \mathsf{K}\Big[\bigotimes_{j=1}^dI\Big]\mathsf{K}^{-1}
  = I.
\end{equation*}
Here we note that $\bigotimes_{j=1}^dI$ is just the identity operator on $\mathcal{H}^{\otimes d}$.
\end{proof}

\begin{theorem}\label{thr-2-5}
For all $\varrho \in \mathfrak{S}_+(\mathcal{H})$ and $n\geq 0$, the sum operator
$\sum_{\varepsilon\in \Lambda^d}C_n^{(\varepsilon)}\varrho\, C_n^{(\varepsilon)}$ belongs to $\mathfrak{S}_+(\mathcal{H})$,
and moreover it holds true that
\begin{equation}\label{eq-2-14}
  \mathrm{Tr}\Big[\sum_{\varepsilon\in \Lambda^d}C_n^{(\varepsilon)}\varrho\, C_n^{(\varepsilon)}\Big]
  = \mathrm{Tr}\varrho.
\end{equation}
\end{theorem}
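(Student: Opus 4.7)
The plan is to use the self-adjointness of each $C_n^{(\varepsilon)}$ together with Theorem~\ref{thr-2-4} to reduce the statement to basic facts about trace-class operators. The two assertions (positivity/trace-class membership and trace preservation) are essentially independent and will be handled in turn.

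First I would establish that each summand $C_n^{(\varepsilon)}\varrho\, C_n^{(\varepsilon)}$ lies in $\mathfrak{S}_+(\mathcal{H})$. Since $\mathsf{K}$ is unitary and $L_n$, $R_n$ are bounded self-adjoint operators on $\mathcal{H}$, each $C_n^{(\varepsilon)}$ is a bounded self-adjoint operator on $\mathcal{H}$. For a bounded operator $A$ and $\varrho\in\mathfrak{S}_+(\mathcal{H})$, the operator $A\varrho A^{\ast}$ is known to be positive and of trace class (this is a standard ideal property of $\mathfrak{S}(\mathcal{H})$). Applied with $A=C_n^{(\varepsilon)}=(C_n^{(\varepsilon)})^{\ast}$, this gives $C_n^{(\varepsilon)}\varrho\, C_n^{(\varepsilon)}\in \mathfrak{S}_+(\mathcal{H})$. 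Since $\Lambda^d$ is finite and $\mathfrak{S}_+(\mathcal{H})$ is closed under finite sums, the full sum lies in $\mathfrak{S}_+(\mathcal{H})$.

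Next I would verify the trace identity \eqref{eq-2-14}. By the cyclicity of the trace (valid when one factor is trace class and the others are bounded) and self-adjointness of $C_n^{(\varepsilon)}$,
\begin{equation*}
  \mathrm{Tr}\bigl[C_n^{(\varepsilon)}\varrho\, C_n^{(\varepsilon)}\bigr]
  = \mathrm{Tr}\bigl[\varrho\, C_n^{(\varepsilon)}C_n^{(\varepsilon)}\bigr].
\end{equation*}
Summing over $\varepsilon\in\Lambda^d$ and using linearity of the trace together with the boundedness of each $C_n^{(\varepsilon)}$ (so that the operator $\varrho\sum_\varepsilon C_n^{(\varepsilon)}C_n^{(\varepsilon)}$ is trace class and the sum and trace can be interchanged on the finite index set $\Lambda^d$), one obtains
\begin{equation*}
  \mathrm{Tr}\Big[\sum_{\varepsilon\in\Lambda^d}C_n^{(\varepsilon)}\varrho\, C_n^{(\varepsilon)}\Big]
  = \mathrm{Tr}\Big[\varrho\sum_{\varepsilon\in\Lambda^d}C_n^{(\varepsilon)}C_n^{(\varepsilon)}\Big].
\end{equation*}
Applying Theorem~\ref{thr-2-4} to the inner sum yields $\mathrm{Tr}[\varrho\, I]=\mathrm{Tr}\varrho$, as required.

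There is no real obstacle here; the whole argument is a two-line manipulation once Theorem~\ref{thr-2-4} is in hand. The only point that deserves care is to justify the trace cyclicity in the form $\mathrm{Tr}[C\varrho C]=\mathrm{Tr}[\varrho C^2]$, which is legitimate because $\varrho$ is trace class and $C=C_n^{(\varepsilon)}$ is bounded.
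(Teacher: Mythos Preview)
Your proof is correct and follows essentially the same approach as the paper: each summand is positive trace class because $C_n^{(\varepsilon)}$ is bounded self-adjoint, and the trace identity follows from cyclicity of the trace combined with Theorem~\ref{thr-2-4}. Your version is slightly more explicit about why the cyclicity step is justified, but the argument is otherwise identical.
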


\begin{proof}
For each $\varepsilon \in \Lambda^d$, $C_n^{(\varepsilon)}\varrho\, C_n^{(\varepsilon)}$ is a positive
operator of trace class on $\mathcal{H}$ since $\varrho$ is such an operator and $C_n^{(\varepsilon)}$ is self-adjoint.
Thus $\sum_{\varepsilon\in \Lambda^d}C_n^{(\varepsilon)}\varrho\, C_n^{(\varepsilon)}$ is also a
positive operator of trace class on $\mathcal{H}$, namely it belongs to $\mathfrak{S}_+(\mathcal{H})$.
Now, by using Theorem~\ref{thr-2-4}, we find
\begin{equation*}
  \mathrm{Tr}\Big[\sum_{\varepsilon\in \Lambda^d}C_n^{(\varepsilon)}\varrho\, C_n^{(\varepsilon)}\Big]
  = \sum_{\varepsilon\in \Lambda^d} \mathrm{Tr}\big[\varrho\,C_n^{(\varepsilon)} C_n^{(\varepsilon)}\big]
  = \mathrm{Tr}\big[ \varrho \sum_{\varepsilon\in \Lambda^d}C_n^{(\varepsilon)} C_n^{(\varepsilon)}\big]
  =\mathrm{Tr}\varrho.
\end{equation*}
This completes the proof.
\end{proof}

\begin{definition}\label{def-2-2}
A $d$-dimensional nucleus $\omega$ on $\mathcal{H}$ is a mapping $\omega \colon \mathbb{Z}^d \rightarrow \mathfrak{S}_+(\mathcal{H})$
satisfying that
\begin{equation}\label{eq-2-16}
  \sum_{\mathrm{x}\in \mathbb{Z}^d}\mathrm{Tr}[\omega(\mathrm{x})]=1.
\end{equation}
The set of all $d$-dimensional nucleuses on $\mathcal{H}$ is denoted as $\mathscr{T}^{(d)}(\mathcal{H})$.
\end{definition}

It can be seen that, for each $d$-dimensional nucleus $\omega \in \mathscr{T}^{(d)}(\mathcal{H})$,
the corresponding function $\mathrm{x}\mapsto \mathrm{Tr}[\omega(\mathrm{x})]$ defines
a probability distribution on $\mathbb{Z}^d$.

\begin{theorem}\label{thr-2-6}
For each $n\geq 0$, there exists a mapping
$\mathfrak{J}_n^{(d)} \colon \mathscr{T}^{(d)}(\mathcal{H}) \rightarrow \mathscr{T}^{(d)}(\mathcal{H})$ such that
\begin{equation}\label{eq-2-17}
  \big[\mathfrak{J}_n^{(d)}\omega\big](\mathrm{x})
  = \sum_{\varepsilon\in \Lambda^d}C_n^{(\varepsilon)}\omega(\mathrm{x}-\varepsilon)C_n^{(\varepsilon)},\quad \mathrm{x}\in \mathbb{Z}^d,\, \omega\in \mathscr{T}^{(d)}(\mathcal{H}),
\end{equation}
where $\sum_{\varepsilon\in \Lambda^d}$ means to sum over $\Lambda^d$.
\end{theorem}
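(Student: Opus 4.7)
The plan is to define $\mathfrak{J}_n^{(d)}$ pointwise by the stated formula and then verify the two defining conditions of a $d$-dimensional nucleus, namely that the output takes values in $\mathfrak{S}_+(\mathcal{H})$ and that the resulting trace-function sums to one over $\mathbb{Z}^d$. Both conditions will follow from results already established, primarily Theorem~\ref{thr-2-5} together with the self-adjointness of the $C_n^{(\varepsilon)}$.

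First I would check well-definedness and positivity. For fixed $\omega \in \mathscr{T}^{(d)}(\mathcal{H})$, $n \geq 0$ and $\mathrm{x}\in \mathbb{Z}^d$, the summand $C_n^{(\varepsilon)}\omega(\mathrm{x}-\varepsilon)C_n^{(\varepsilon)}$ is positive and of trace class since $\omega(\mathrm{x}-\varepsilon) \in \mathfrak{S}_+(\mathcal{H})$ and $C_n^{(\varepsilon)}$ is self-adjoint and bounded. As $\Lambda^d$ is finite, the sum defining $[\mathfrak{J}_n^{(d)}\omega](\mathrm{x})$ lies in $\mathfrak{S}_+(\mathcal{H})$, exactly as in the first assertion of Theorem~\ref{thr-2-5}.

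The main content is to verify the normalization $\sum_{\mathrm{x} \in \mathbb{Z}^d}\mathrm{Tr}\big[[\mathfrak{J}_n^{(d)}\omega](\mathrm{x})\big]=1$. Here I would expand the trace, use linearity to push the trace inside the finite sum over $\varepsilon$, and then interchange the orders of summation over $\mathbb{Z}^d$ and $\Lambda^d$; this interchange is legitimate because all terms are nonnegative (Tonelli), and also because $\Lambda^d$ is finite. After the swap I would apply the translation substitution $\mathrm{y}=\mathrm{x}-\varepsilon$ inside the inner sum over $\mathbb{Z}^d$, giving
\begin{equation*}
\sum_{\mathrm{x} \in \mathbb{Z}^d}\mathrm{Tr}\big[[\mathfrak{J}_n^{(d)}\omega](\mathrm{x})\big]
= \sum_{\mathrm{y}\in \mathbb{Z}^d} \sum_{\varepsilon \in \Lambda^d} \mathrm{Tr}\big[C_n^{(\varepsilon)}\omega(\mathrm{y})C_n^{(\varepsilon)}\big].
\end{equation*}
Applying Theorem~\ref{thr-2-5} to each $\omega(\mathrm{y})$ collapses the inner sum to $\mathrm{Tr}[\omega(\mathrm{y})]$, and the remaining sum over $\mathrm{y}$ equals $1$ by the nucleus condition \eqref{eq-2-16} on $\omega$.

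I do not expect any real obstacle here; the only subtle point is the justification of the term-by-term reshuffling of infinitely many operators in the trace norm. Positivity of every summand, however, makes this a routine application of Tonelli's theorem once the trace is written as an absolutely convergent positive series (e.g.\ by expanding in an orthonormal basis of $\mathcal{H}$). Thus the construction yields a map $\mathfrak{J}_n^{(d)}\colon \mathscr{T}^{(d)}(\mathcal{H}) \rightarrow \mathscr{T}^{(d)}(\mathcal{H})$ as required.
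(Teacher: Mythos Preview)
Your proposal is correct and follows essentially the same route as the paper: define $\mathfrak{J}_n^{(d)}\omega$ pointwise, observe that each $C_n^{(\varepsilon)}\omega(\mathrm{x}-\varepsilon)C_n^{(\varepsilon)}\in\mathfrak{S}_+(\mathcal{H})$, then verify normalization by swapping the sums over $\mathbb{Z}^d$ and $\Lambda^d$ (justified by positivity), shifting $\mathrm{x}\mapsto \mathrm{x}-\varepsilon$, and invoking Theorem~\ref{thr-2-5}. The paper's proof is the same argument, only phrasing the interchange justification more tersely (``all the series involved have positive terms'') where you invoke Tonelli explicitly.
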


\begin{proof}
Let $n\geq 0$.
For each $\omega\in \mathscr{T}^{(d)}(\mathcal{H})$, there is naturally a mapping $\omega'$ on $\mathbb{Z}^d$ associated with
$\omega$ in the following way
\begin{equation*}
  \omega'(\mathrm{x})=\sum_{\varepsilon\in \Lambda^d}C_n^{(\varepsilon)}\omega(\mathrm{x}-\varepsilon)C_n^{(\varepsilon)},\quad \mathrm{x}\in \mathbb{Z}^d.
\end{equation*}
We observe that $C_n^{(\varepsilon)}\omega(\mathrm{x}-\varepsilon)C_n^{(\varepsilon)}\in \mathfrak{S}_+(\mathcal{H})$
for all $\mathrm{x}\in \mathbb{Z}^d$ and all $\varepsilon\in \Lambda^d$, which implies that
$\omega'(\mathrm{x}) \in \mathfrak{S}_+(\mathcal{H})$ for all $\mathrm{x}\in \mathbb{Z}^d$,
hence $\omega'$ is a mapping from $\mathbb{Z}^d$ to $\mathfrak{S}_+(\mathcal{H})$.

Next we show that $\sum_{\mathrm{x}\in \mathbb{Z}^d}\mathrm{Tr}[\omega'(\mathrm{x})]=1$.
In fact, for each $\mathrm{x}\in \mathbb{Z}^d$, by Theorem~\ref{thr-2-5} we have
\begin{equation*}
  \sum_{\varepsilon\in \Lambda^d}\mathrm{Tr}\big[C_n^{(\varepsilon)}\omega(\mathrm{x})C_n^{(\varepsilon)}\big]
  = \mathrm{Tr}[\omega(\mathrm{x})].
\end{equation*}
Thus, in view of the fact that all the series involved have positive terms, we get
\begin{equation*}
  \sum_{\mathrm{x}\in \mathbb{Z}^d}\mathrm{Tr}[\omega'(\mathrm{x})]
  = \sum_{\mathrm{x}\in \mathbb{Z}^d}\sum_{\varepsilon\in \Lambda^d}
    \mathrm{Tr}\big[C_n^{(\varepsilon)}\omega(\mathrm{x}-\varepsilon)C_n^{(\varepsilon)}\big]
  = \sum_{\mathrm{x}\in \mathbb{Z}^d}\sum_{\varepsilon\in \Lambda^d}
    \mathrm{Tr}\big[C_n^{(\varepsilon)}\omega(\mathrm{x})C_n^{(\varepsilon)}\big]
  = \sum_{\mathrm{x}\in \mathbb{Z}^d}\mathrm{Tr}[\omega(\mathrm{x})],
\end{equation*}
which together with $\omega\in \mathscr{T}^{(d)}(\mathcal{H})$ implies that
$\sum_{\mathrm{x}\in \mathbb{Z}^d}\mathrm{Tr}[\omega'(\mathrm{x})]=1$. Now, according to Definition~\ref{def-2-2},
we know that $\omega'\in \mathscr{T}^{(d)}(\mathcal{H})$.
Finally, we define a mapping
$\mathfrak{J}_n^{(d)} \colon \mathscr{T}^{(d)}(\mathcal{H}) \rightarrow \mathscr{T}^{(d)}(\mathcal{H})$
as
\begin{equation*}
  \mathfrak{J}_n^{(d)}\omega = \omega',\quad \omega\in \mathscr{T}^{(d)}(\mathcal{H}).
\end{equation*}
Then $\mathfrak{J}_n^{(d)}$ is the desired.
\end{proof}

\subsection{Definition of walk}\label{subsec-2-3}

This subsection first offers the definition of our model of OQW and then examines its basic properties.

As mentioned above, we call a positive operator of trace class (on a Hilbert space)
a density operator if it has unit trace.
Recall that $d\geq 2$ is a given positive integer.
We denote by $l^2(\mathbb{Z}^d)$ the space of square summable complex-valued functions
on the $d$-dimensional integer lattice $\mathbb{Z}^d$. As a Hilbert space, $l^2(\mathbb{Z}^d)$ has
a countable orthonormal basis $\big\{\delta_{\mathrm{x}} \mid \mathrm{x}\in \mathbb{Z}^d\big\}$, which is known as
the canonical ONB of $l^2(\mathbb{Z}^d)$, where $\delta_{\mathrm{x}}$ is the function on $\mathbb{Z}^d$ given by
\begin{equation*}
  \delta_{\mathrm{x}}(\mathrm{z})=
  \left\{
    \begin{array}{ll}
      1, & \hbox{$\mathrm{z}=\mathrm{x}$, $\mathrm{z}\in \mathbb{Z}^d$;} \\
      0, & \hbox{$\mathrm{z}\neq \mathrm{x}$, $\mathrm{z}\in \mathbb{Z}^d$.}
    \end{array}
  \right.
\end{equation*}
By convention, we use $|\delta_{\mathrm{x}}\rangle\!\langle \delta_{\mathrm{x}}|$ to mean the Dirac operator associated with $\delta_{\mathrm{x}}$, which is a density operator on $l^2(\mathbb{Z}^d)$.

By the general theory of trace class operators on a Hilbert space \cite{simon}, one can easily come to the
next lemma, which provides a way to construct a density operator on
the tensor space $l^2(\mathbb{Z}^d)\otimes \mathcal{H}$ from the
canonical ONB of $l^2(\mathbb{Z}^d)$ and elements of $\mathscr{T}^{(d)}(\mathcal{H})$.

\begin{lemma}\label{lem-2-7}
Let $\omega \in \mathscr{T}^{(d)}(\mathcal{H})$. Then,  for each $\mathrm{x}\in \mathbb{Z}^d$, $|\delta_{\mathrm{x}}\rangle\!\langle\delta_{\mathrm{x}}|\otimes \omega(\mathrm{x})$
is a positive operator of trace class on $l^2(\mathbb{Z}^d)\otimes \mathcal{H}$. Moreover, the operator series
\begin{equation}\label{eq-2-17}
  \sum_{\mathrm{x}\in \mathbb{Z}^d}|\delta_{\mathrm{x}}\rangle\!\langle\delta_{\mathrm{x}}|\otimes \omega(\mathrm{x})
\end{equation}
is convergent in the trace operator norm and its sum operator is a density operator on $l^2(\mathbb{Z}^d)\otimes \mathcal{H}$.
\end{lemma}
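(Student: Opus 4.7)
The plan is to unfold the three assertions and dispatch each using standard trace class theory. First I would verify that each term is positive trace class: since $|\delta_{\mathrm{x}}\rangle\!\langle\delta_{\mathrm{x}}|$ is a rank-one orthogonal projection on $l^2(\mathbb{Z}^d)$ (hence positive and trace class with trace $1$) and $\omega(\mathrm{x})\in\mathfrak{S}_+(\mathcal{H})$, the tensor product of two positive trace class operators is again positive and trace class on the Hilbert tensor product, with trace multiplying. This gives
\begin{equation*}
  \mathrm{Tr}\bigl[|\delta_{\mathrm{x}}\rangle\!\langle\delta_{\mathrm{x}}|\otimes \omega(\mathrm{x})\bigr]
  = \mathrm{Tr}|\delta_{\mathrm{x}}\rangle\!\langle\delta_{\mathrm{x}}|\cdot \mathrm{Tr}[\omega(\mathrm{x})]
  = \mathrm{Tr}[\omega(\mathrm{x})].
\end{equation*}

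Next I would establish convergence. Since $\mathbb{Z}^d$ is countable but has no canonical linear order for $d\geq 2$, I would interpret the series as a net indexed by the directed family of finite subsets $F\subset \mathbb{Z}^d$, with partial sums $S_F=\sum_{\mathrm{x}\in F}|\delta_{\mathrm{x}}\rangle\!\langle\delta_{\mathrm{x}}|\otimes\omega(\mathrm{x})$. The key observation is that for $F\subset F'$ the difference $S_{F'}-S_F$ is positive trace class, so its trace norm coincides with its trace, and the latter equals $\sum_{\mathrm{x}\in F'\setminus F}\mathrm{Tr}[\omega(\mathrm{x})]$. Because $\sum_{\mathrm{x}\in\mathbb{Z}^d}\mathrm{Tr}[\omega(\mathrm{x})]=1<\infty$, the tails of this numerical series can be made arbitrarily small by choosing $F$ large enough, which makes $\{S_F\}$ a Cauchy net in the Banach space $\mathfrak{S}(l^2(\mathbb{Z}^d)\otimes\mathcal{H})$. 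Completeness then delivers an operator $S\in\mathfrak{S}(l^2(\mathbb{Z}^d)\otimes\mathcal{H})$ as the trace-norm limit.

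Finally I would identify $S$ as a density operator. Positivity of $S$ follows because the positive cone $\mathfrak{S}_+(l^2(\mathbb{Z}^d)\otimes\mathcal{H})$ is closed in the trace norm and each $S_F$ is positive. For the unit trace, I would use that $\mathrm{Tr}\colon\mathfrak{S}\to\mathbb{C}$ is continuous with respect to the trace norm, so
\begin{equation*}
  \mathrm{Tr}\,S
  = \lim_F \mathrm{Tr}\,S_F
  = \lim_F \sum_{\mathrm{x}\in F}\mathrm{Tr}[\omega(\mathrm{x})]
  = \sum_{\mathrm{x}\in \mathbb{Z}^d}\mathrm{Tr}[\omega(\mathrm{x})]
  = 1,
\end{equation*}
where the last equality uses $\omega\in \mathscr{T}^{(d)}(\mathcal{H})$.

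The proof is essentially bookkeeping on top of the cited facts from Simon's monograph, and I do not anticipate any real obstacle. The one point to handle carefully is the fact that the index set $\mathbb{Z}^d$ is unordered, so the convergence must be phrased as an unconditional (net) convergence indexed by finite subsets; positivity of each summand is precisely what makes this work, via the identity $\|A\|_1=\mathrm{Tr}\,A$ for $A\in\mathfrak{S}_+$.
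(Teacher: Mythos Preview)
Your proposal is correct. The paper does not actually prove this lemma; it merely states that ``by the general theory of trace class operators on a Hilbert space \cite{simon}, one can easily come to the next lemma,'' so your argument is precisely the sort of standard trace-class verification the authors are deferring to Simon's monograph, and there is nothing to compare beyond noting that you have written out in full what the paper leaves implicit.
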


With help of this lemma, we are now ready to introduce our model of OQW as follows.

\begin{definition}\label{def-2-3}
The $d$-dimensional open QBN walk is an open quantum walk on the $d$-dimensional integer lattice $\mathbb{Z}^d$ that admits the following features.
\begin{itemize}

\item Its states are represented by density operators on the tensor space $l^2(\mathbb{Z}^d)\otimes \mathcal{H}$.

\item Let $\widetilde{\omega^{(n)}}$ be the state of the walk at time $n\geq 0$. Then $\widetilde{\omega^{(n)}}$ takes the form
     \begin{equation}\label{eq-2-18}
     \widetilde{\omega^{(n)}} = \sum_{\mathrm{x} \in \mathbb{Z}^d}|\delta_{\mathrm{x}}\rangle\langle\delta_{\mathrm{x}}|\otimes \omega^{(n)}(\mathrm{x}),
     \end{equation}
  where $\omega^{(n)}\in \mathscr{T}^{(d)}(\mathcal{H})$, which is called the nucleus of the state $\widetilde{\omega^{(n)}}$.

  \item The time evolution of the walk is governed by equation
     \begin{equation}\label{eq-2-19}
      \omega^{(n+1)} = \mathfrak{J}_n^{(d)}\omega^{(n)},\quad n\geq 0,
      \end{equation}
 where $\omega^{(n+1)}$ and $\omega^{(n)}$ are the nucleuses of the states $\widetilde{\omega^{(n+1)}}$ and $\widetilde{\omega^{(n)}}$, respectively and $\mathfrak{J}_n^{(d)}$ is the mapping described in Theorem~\ref{thr-2-6}.
\end{itemize}
In that case, the function $\mathrm{x}\mapsto \mathrm{Tr}[\omega^{(n)}(\mathrm{x})]$ on $\mathbb{Z}^d$ is called
the probability distribution of the walk at time $n\geq 0$,
while the quantity $\mathrm{Tr}[\omega^{(n)}(\mathrm{x})]$ is the probability to find out the walker
at position $\mathrm{x}\in \mathbb{Z}^d$ and time $n\geq 0$.
By convention, the state $\widetilde{\omega^{(0)}}$ of the walk at time $n=0$ is usually known as its initial state.
\end{definition}

Physically, $l^2(\mathbb{Z}^d)$ describes the position of the walk, while $\mathcal{H}$ describes the
internal degrees of freedom of the walk. As shown above, $\mathcal{H}$ is infinitely dimensional,
which means that the $d$-dimensional open QBN walk has infinitely many internal degrees of freedom.

\begin{remark}
It is not hard to see that the $d$-dimensional open QBN walk is completely determined by the nucleus sequence of its states.
Let $\big(\omega^{(n)}\big)_{n\geq 0}$ be the nucleus sequence of states of the $d$-dimensional open QBN walk.
Then, by Theorem~\ref{thr-2-6}, one has the following evolution relations
\begin{equation}\label{eq-2-20}
  \omega^{(n+1)}(\mathrm{x}) = \sum_{\varepsilon\in \Lambda^d}C_n^{(\varepsilon)}\omega^{(n)}(\mathrm{x}-\varepsilon)C_n^{(\varepsilon)},
  \quad \mathrm{x}\in \mathbb{Z}^d,\, n\geq 0,
\end{equation}
which actually give an alternative description of the evolution of the $d$-dimensional open QBN walk.
\end{remark}

\subsection{Quantum channel representation}\label{subsec-2-4}

In this subsection, we mainly offer a quantum channel representation of the $d$-dimensional open QBN walk.

For $n\geq 0$ and $\mathrm{x}$, $\mathrm{y}\in \mathbb{Z}^d$, we define an operator $M^{(n)}(\mathrm{x},\mathrm{y})$ on the tensor space $l^2(\mathbb{Z}^d)\otimes \mathcal{H}$ as
\begin{equation}\label{eq-2-21}
  M^{(n)}_{\mathrm{x},\mathrm{y}}=
  \left\{
    \begin{array}{ll}
      |\delta_{\mathrm{x}}\rangle\!\langle \delta_{\mathrm{y}}|\otimes C_n^{(\mathrm{x}-\mathrm{y})}, & \hbox{$\mathrm{x}-\mathrm{y} \in \Lambda^d$;} \\
      0, & \hbox{$\mathrm{x}-\mathrm{y} \notin \Lambda^d$.}
    \end{array}
  \right.
\end{equation}
Clearly, for each $n \geq 0$, the operator family
$\big\{M^{(n)}_{\mathrm{x},\mathrm{y}} \mid \mathrm{x},\, \mathrm{y} \in \mathbb{Z}^d\big\}$
is infinite and countable.

\begin{theorem}\label{thr-2-8}
For each $n\geq 0$, the countable family
$\big\{M^{(n)}_{\mathrm{x},\mathrm{y}} \mid \mathrm{x},\, \mathrm{y} \in \mathbb{Z}^d\big\}$ of  operators satisfies
the following relation
\begin{equation}\label{eq-2-22}
  \sum_{\mathrm{x},\mathrm{y} \in \mathbb{Z}^d}{M^{(n)}_{\mathrm{x},\mathrm{y}}}^*M^{(n)}_{\mathrm{x},\mathrm{y}}=I,
\end{equation}
where ${M^{(n)}_{\mathrm{x},\mathrm{y}}}^*$ means the adjoint of $M^{(n)}_{\mathrm{x},\mathrm{y}}$, $I$ denotes the identity operator on $l^2(\mathbb{Z}^d)\otimes \mathcal{H}$
and the operator series converges strongly.
\end{theorem}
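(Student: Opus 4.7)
The plan is to unwind the definition of $M^{(n)}_{\mathrm{x},\mathrm{y}}$, reindex the double sum in terms of $\varepsilon = \mathrm{x}-\mathrm{y}$, and then invoke Theorem~\ref{thr-2-4} to collapse the inner summation to the identity. First I would note that each $C_n^{(\varepsilon)}$ is self-adjoint on $\mathcal{H}$ (as recorded right after Definition~\ref{def-2-1}), which combined with $\bigl(|\delta_{\mathrm{x}}\rangle\!\langle \delta_{\mathrm{y}}|\bigr)^*=|\delta_{\mathrm{y}}\rangle\!\langle \delta_{\mathrm{x}}|$ and the standard rule $(A\otimes B)^*=A^*\otimes B^*$ yields
\begin{equation*}
  {M^{(n)}_{\mathrm{x},\mathrm{y}}}^* = |\delta_{\mathrm{y}}\rangle\!\langle \delta_{\mathrm{x}}|\otimes C_n^{(\mathrm{x}-\mathrm{y})}
  \quad \text{whenever } \mathrm{x}-\mathrm{y}\in \Lambda^d,
\end{equation*}
and zero otherwise. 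Using $\langle \delta_{\mathrm{x}}|\delta_{\mathrm{x}}\rangle=1$ together with the multiplicativity of the tensor product, I would then compute
\begin{equation*}
  {M^{(n)}_{\mathrm{x},\mathrm{y}}}^*M^{(n)}_{\mathrm{x},\mathrm{y}} = |\delta_{\mathrm{y}}\rangle\!\langle \delta_{\mathrm{y}}|\otimes C_n^{(\mathrm{x}-\mathrm{y})}C_n^{(\mathrm{x}-\mathrm{y})}
\end{equation*}
for pairs with $\mathrm{x}-\mathrm{y}\in \Lambda^d$.

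Next I would fix $\mathrm{y}\in \mathbb{Z}^d$ and perform the change of variable $\varepsilon=\mathrm{x}-\mathrm{y}$. Since only the finitely many $\mathrm{x}$ with $\mathrm{x}-\mathrm{y}\in \Lambda^d$ contribute, the $\mathrm{x}$-sum is a finite sum, giving
\begin{equation*}
  \sum_{\mathrm{x}\in \mathbb{Z}^d}{M^{(n)}_{\mathrm{x},\mathrm{y}}}^*M^{(n)}_{\mathrm{x},\mathrm{y}}
  = |\delta_{\mathrm{y}}\rangle\!\langle \delta_{\mathrm{y}}|\otimes \sum_{\varepsilon\in \Lambda^d}C_n^{(\varepsilon)}C_n^{(\varepsilon)}
  = |\delta_{\mathrm{y}}\rangle\!\langle \delta_{\mathrm{y}}|\otimes I_{\mathcal{H}},
\end{equation*}
where the last equality is exactly Theorem~\ref{thr-2-4}. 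Summing over $\mathrm{y}$ and using the fact that $\{\delta_{\mathrm{y}}\mid \mathrm{y}\in \mathbb{Z}^d\}$ is the canonical ONB of $l^2(\mathbb{Z}^d)$ formally yields $I_{l^2(\mathbb{Z}^d)}\otimes I_{\mathcal{H}}=I$, which is the claimed identity.

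The one step that needs genuine care, rather than algebra, is justifying the strong convergence of the double series. To handle this cleanly I would check convergence on the dense set of finite linear combinations of elementary tensors $\delta_{\mathrm{z}}\otimes \xi$ with $\mathrm{z}\in \mathbb{Z}^d$, $\xi\in \mathcal{H}$. For such a vector, only the summands with $\mathrm{y}=\mathrm{z}$ survive, and the $\mathrm{x}$-sum is finite by the support condition $\mathrm{x}-\mathrm{y}\in \Lambda^d$, so the partial sums are eventually constant and equal to $\delta_{\mathrm{z}}\otimes \xi$. Since the partial sums are uniformly bounded (each is a sum of mutually orthogonal projections on distinct $\mathrm{y}$-subspaces, bounded above by $I$), the standard $\varepsilon/3$ argument then extends strong convergence from this dense subspace to all of $l^2(\mathbb{Z}^d)\otimes \mathcal{H}$, completing the proof. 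This uniform boundedness of the partial sums — essentially the fact that grouping the series by $\mathrm{y}$ produces orthogonal projections — is the main technical point, but it follows directly from the computation above.
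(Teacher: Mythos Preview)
Your proof is correct and follows essentially the same route as the paper: both fix $\mathrm{y}$, observe that the $\mathrm{x}$-sum is finite and collapses via Theorem~\ref{thr-2-4} to $|\delta_{\mathrm{y}}\rangle\!\langle\delta_{\mathrm{y}}|\otimes I_{\mathcal{H}}$, and then sum over $\mathrm{y}$ using the strong convergence of the ONB resolution of the identity on $l^2(\mathbb{Z}^d)$. Your treatment is slightly more explicit about the adjoint computation and the $\varepsilon/3$ extension of strong convergence from a dense subspace, whereas the paper simply cites the known fact that $\sum_{\mathrm{y}}|\delta_{\mathrm{y}}\rangle\!\langle\delta_{\mathrm{y}}|\to I_{l^2(\mathbb{Z}^d)}$ strongly.
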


\begin{proof}
First, for each $\mathrm{y}\in \mathbb{Z}^d$, we find that there are at most finitely many $\mathrm{x}\in \mathbb{Z}^d$
such that $M^{(n)}_{\mathrm{x},\mathrm{y}}\neq 0$,
which implies that the series
$\sum_{\mathrm{x} \in \mathbb{Z}^d}{M^{(n)}_{\mathrm{x},\mathrm{y}}}^*M^{(n)}_{\mathrm{x},\mathrm{y}}$
is actually a sum of finitely many summands. On the other hand, for each $\mathrm{y}\in \mathbb{Z}^d$,
by a direct calculation we have
\begin{equation*}
\begin{split}
\sum_{\mathrm{x} \in \mathbb{Z}^d}{M^{(n)}_{\mathrm{x},\mathrm{y}}}^*M^{(n)}_{\mathrm{x},\mathrm{y}}
   &= \sum_{\mathrm{x}-\mathrm{y} \in \Lambda^d}
    \big(|\delta_{\mathrm{x}}\rangle\!\langle \delta_{\mathrm{y}}|\otimes C_n^{(\mathrm{x}-\mathrm{y})}\big)^*
    \big(|\delta_{\mathrm{x}}\rangle\!\langle \delta_{\mathrm{y}}|\otimes C_n^{(\mathrm{x}-\mathrm{y})}\big)\\
   &= \sum_{\mathrm{x}-\mathrm{y} \in \Lambda^d}
    \big(|\delta_{\mathrm{y}}\rangle\!\langle \delta_{\mathrm{x}}|\otimes C_n^{(\mathrm{x}-\mathrm{y})}\big)
    \big(|\delta_{\mathrm{x}}\rangle\!\langle \delta_{\mathrm{y}}|\otimes C_n^{(\mathrm{x}-\mathrm{y})}\big)\\
   &= \sum_{\mathrm{x}-\mathrm{y} \in \Lambda^d}|\delta_{\mathrm{y}}\rangle\!\langle \delta_{\mathrm{y}}|
      \otimes \big( C_n^{(\mathrm{x}-\mathrm{y})} C_n^{(\mathrm{x}-\mathrm{y})}\big)\\
   &= |\delta_{\mathrm{y}}\rangle\!\langle \delta_{\mathrm{y}}|\otimes \sum_{\mathrm{x}-\mathrm{y} \in \Lambda^d}
        C_n^{(\mathrm{x}-\mathrm{y})} C_n^{(\mathrm{x}-\mathrm{y})}\\
   &= |\delta_{\mathrm{y}}\rangle\!\langle \delta_{\mathrm{y}}|\otimes I_{\mathcal{H}}.
\end{split}
\end{equation*}
Therefore, using the fact that the series
$\sum_{\mathrm{y} \in \mathbb{Z}^d}|\delta_{\mathrm{y}}\rangle\!\langle \delta_{\mathrm{y}}|$
strongly converges to $I_{l^2(\mathbb{Z}^d)}$, we finally come to
\begin{equation*}
  \sum_{\mathrm{x},\mathrm{y} \in \mathbb{Z}^d}{M^{(n)}_{\mathrm{x},\mathrm{y}}}^*M^{(n)}_{\mathrm{x},\mathrm{y}}
  = \sum_{\mathrm{y} \in \mathbb{Z}^d}\sum_{\mathrm{x} \in \mathbb{Z}^d}
    {M^{(n)}_{\mathrm{x},\mathrm{y}}}^*M^{(n)}_{\mathrm{x},\mathrm{y}}
 = \sum_{\mathrm{y} \in \mathbb{Z}^d}|\delta_{\mathrm{y}}\rangle\!\langle \delta_{\mathrm{y}}|\otimes I_{\mathcal{H}}
 = I_{l^2(\mathbb{Z}^d)}\otimes I_{\mathcal{H}}
 = I.
\end{equation*}
Here, $I_{l^2(\mathbb{Z}^d)}$ and $I_{\mathcal{H}}$ mean the identity operators on $l^2(\mathbb{Z}^d)$
and $\mathcal{H}$, respectively.
\end{proof}

As above, we denote by $\mathfrak{S}\big(l^2(\mathbb{Z}^d)\otimes \mathcal{H}\big)$
the space of trace class operators on $l^2(\mathbb{Z}^d)\otimes \mathcal{H}$.
Then, by the general theory of trace class operators \cite{simon}, for each $n\geq 0$ and
each $\widetilde{\omega} \in \mathfrak{S}\big(l^2(\mathbb{Z}^d)\otimes \mathcal{H}\big)$,
the operator series
\begin{equation*}
  \sum_{\mathrm{x},\mathrm{y} \in \mathbb{Z}^d} M^{(n)}_{\mathrm{x},\mathrm{y}}\,\widetilde{\omega}\, {M^{(n)}_{\mathrm{x},\mathrm{y}}}^*
\end{equation*}
converges in the trace norm, and moreover its sum still belongs to
$\mathfrak{S}\big(l^2(\mathbb{Z}^d)\otimes \mathcal{H}\big)$.

\begin{definition}\label{def-2-4}
For $n\geq 0$, we define a mapping
$\mathsf{M}^{(n)}\colon \mathfrak{S}\big(l^2(\mathbb{Z}^d)\otimes \mathcal{H}\big) \rightarrow \mathfrak{S}\big(l^2(\mathbb{Z}^d)\otimes \mathcal{H}\big)$ as
\begin{equation}\label{eq}
  \mathsf{M}^{(n)}(\widetilde{\omega})
= \sum_{\mathrm{x},\mathrm{y} \in \mathbb{Z}^d} M^{(n)}_{\mathrm{x},\mathrm{y}}\, \widetilde{\omega}\, {M^{(n)}_{\mathrm{x},\mathrm{y}}}^*,\quad
\widetilde{\omega} \in \mathfrak{S}\big(l^2(\mathbb{Z}^d)\otimes \mathcal{H}\big),
\end{equation}
where $\sum_{\mathrm{x},\mathrm{y} \in \mathbb{Z}^d}$ means to sum for all $\mathrm{x}$, $\mathrm{y} \in \mathbb{Z}^d$.
\end{definition}

From a point of quantum information theory \cite{petz}, the mapping $\mathsf{M}^{(n)}$ is actually a quantum channel of Krauss type.
The next result then gives a quantum channel representation of the $d$-dimensional open QBN walk.

\begin{theorem}\label{thr-2-9}
Let $\left(\widetilde{\omega^{(n)}}\right)_{n\geq 0}$ the state sequence of the $d$-dimensional open QBN walk. Then it satisfies the following evolution equation
\begin{equation}\label{eq-2-24}
  \widetilde{\omega^{(n+1)}} = \mathsf{M}^{(n)}\left(\widetilde{\omega^{(n)}}\right),\quad n\geq 0.
\end{equation}
\end{theorem}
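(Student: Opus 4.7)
The plan is to compute $\mathsf{M}^{(n)}\bigl(\widetilde{\omega^{(n)}}\bigr)$ directly by substituting the expansion \eqref{eq-2-18} of $\widetilde{\omega^{(n)}}$ into the Kraus-type sum, use the orthonormality of $\{\delta_{\mathrm{x}}\}$ to collapse the intermediate position index, and then reindex so that the answer matches the nucleus evolution \eqref{eq-2-20}. Since $M^{(n)}_{\mathrm{x},\mathrm{y}}=0$ unless $\mathrm{x}-\mathrm{y}\in\Lambda^d$, the outer double sum effectively ranges over pairs with $\mathrm{x}-\mathrm{y}\in\Lambda^d$, which is a key simplification.

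First I would fix $n\geq 0$ and write
\begin{equation*}
\mathsf{M}^{(n)}\bigl(\widetilde{\omega^{(n)}}\bigr)
= \sum_{\substack{\mathrm{x},\mathrm{y}\in\mathbb{Z}^d\\ \mathrm{x}-\mathrm{y}\in\Lambda^d}}\sum_{\mathrm{z}\in\mathbb{Z}^d}
\bigl(|\delta_{\mathrm{x}}\rangle\!\langle \delta_{\mathrm{y}}|\otimes C_n^{(\mathrm{x}-\mathrm{y})}\bigr)
\bigl(|\delta_{\mathrm{z}}\rangle\!\langle \delta_{\mathrm{z}}|\otimes \omega^{(n)}(\mathrm{z})\bigr)
\bigl(|\delta_{\mathrm{y}}\rangle\!\langle \delta_{\mathrm{x}}|\otimes C_n^{(\mathrm{x}-\mathrm{y})}\bigr),
\end{equation*}
where I used that $C_n^{(\mathrm{x}-\mathrm{y})}$ is self-adjoint so that $\bigl(M^{(n)}_{\mathrm{x},\mathrm{y}}\bigr)^* = |\delta_{\mathrm{y}}\rangle\!\langle\delta_{\mathrm{x}}|\otimes C_n^{(\mathrm{x}-\mathrm{y})}$. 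Applying the multiplicativity of the tensor product together with the identities $\langle \delta_{\mathrm{y}}|\delta_{\mathrm{z}}\rangle=\delta_{\mathrm{y},\mathrm{z}}$ and $\langle \delta_{\mathrm{z}}|\delta_{\mathrm{y}}\rangle=\delta_{\mathrm{y},\mathrm{z}}$ collapses the $\mathrm{z}$-sum to the single term $\mathrm{z}=\mathrm{y}$, leaving
\begin{equation*}
\mathsf{M}^{(n)}\bigl(\widetilde{\omega^{(n)}}\bigr)
= \sum_{\substack{\mathrm{x},\mathrm{y}\in\mathbb{Z}^d\\ \mathrm{x}-\mathrm{y}\in\Lambda^d}}
|\delta_{\mathrm{x}}\rangle\!\langle \delta_{\mathrm{x}}|\otimes C_n^{(\mathrm{x}-\mathrm{y})}\,\omega^{(n)}(\mathrm{y})\,C_n^{(\mathrm{x}-\mathrm{y})}.
\end{equation*}

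Next I would change variables by setting $\varepsilon=\mathrm{x}-\mathrm{y}$, so $\mathrm{y}=\mathrm{x}-\varepsilon$, and swap the order of summation (which is legitimate because for each $\mathrm{x}$ only finitely many $\varepsilon\in\Lambda^d$ contribute, so the reorganization amounts to a finite rearrangement inside a trace-norm convergent series). This yields
\begin{equation*}
\mathsf{M}^{(n)}\bigl(\widetilde{\omega^{(n)}}\bigr)
= \sum_{\mathrm{x}\in\mathbb{Z}^d}|\delta_{\mathrm{x}}\rangle\!\langle \delta_{\mathrm{x}}|\otimes
\Bigl[\sum_{\varepsilon\in\Lambda^d} C_n^{(\varepsilon)}\,\omega^{(n)}(\mathrm{x}-\varepsilon)\,C_n^{(\varepsilon)}\Bigr].
\end{equation*}
By Theorem~\ref{thr-2-6} the bracketed expression equals $\bigl[\mathfrak{J}_n^{(d)}\omega^{(n)}\bigr](\mathrm{x})$, and by \eqref{eq-2-19} this is $\omega^{(n+1)}(\mathrm{x})$. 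Comparing with \eqref{eq-2-18} at time $n+1$ gives the claim.

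No step is genuinely hard; the only subtlety is bookkeeping, namely carefully justifying that the restriction to $\mathrm{x}-\mathrm{y}\in\Lambda^d$ together with the reindexing $\varepsilon=\mathrm{x}-\mathrm{y}$ is compatible with the trace-norm convergence guaranteed by Lemma~\ref{lem-2-7} and the discussion preceding Definition~\ref{def-2-4}. This is handled by noting that for each $\mathrm{x}$ the inner sum over $\varepsilon$ is finite (it has exactly $2^d$ terms), so no new convergence issue is introduced.
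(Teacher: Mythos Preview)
Your proof is correct and follows essentially the same approach as the paper: both substitute the nucleus expansion of $\widetilde{\omega^{(n)}}$ into the Kraus sum, use the orthonormality of $\{\delta_{\mathrm{x}}\}$ together with the self-adjointness of $C_n^{(\varepsilon)}$ to collapse the intermediate index, and then reindex via $\varepsilon=\mathrm{x}-\mathrm{y}$ to recover the nucleus evolution \eqref{eq-2-20}. The paper organizes the computation by first fixing $\mathrm{y}$ and summing over $\mathrm{x}$, whereas you write the full sum at once, but this is only a cosmetic difference.
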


\begin{proof}
Let $n\geq 0$. By Definition~\ref{def-2-3}, $\widetilde{\omega^{(n)}}$ has a representation of the following form
\begin{equation*}
  \widetilde{\omega^{(n)}}
= \sum_{\mathrm{z} \in \mathbb{Z}^d}|\delta_{\mathrm{z}}\rangle\langle\delta_{\mathrm{z}}|\otimes \omega^{(n)}(\mathrm{z}),
\end{equation*}
where the series converges strongly. For $\mathrm{x}$, $\mathrm{y}\in \mathbb{Z}^d$
with $\mathrm{x}-\mathrm{y}\in \Lambda^d$, in view of the fact
\begin{equation*}
  |\delta_{\mathrm{x}}\rangle\langle\delta_{\mathrm{y}}|\,|\delta_{\mathrm{z}}\rangle\langle\delta_{\mathrm{z}}|
  =\left\{
     \begin{array}{ll}
       |\delta_{\mathrm{x}}\rangle\langle\delta_{\mathrm{y}}|, & \hbox{$\mathrm{z}=\mathrm{y}$;} \\
       0, & \hbox{$\mathrm{z}=\mathrm{y}$, $\mathrm{z}\in \mathbb{Z}^d$,}
     \end{array}
   \right.
\end{equation*}
we have
\begin{equation*}
\begin{split}
  M^{(n)}_{\mathrm{x},\mathrm{y}}\, \widetilde{\omega^{(n)}}\, {M^{(n)}_{\mathrm{x},\mathrm{y}}}^*
   &= \big[|\delta_{\mathrm{x}}\rangle\langle\delta_{\mathrm{y}}|\otimes C_n^{(\mathrm{x}-\mathrm{y})}\big]
      \Big[\sum_{\mathrm{z} \in \mathbb{Z}^d}|\delta_{\mathrm{z}}\rangle\langle\delta_{\mathrm{z}}|
      \otimes \omega^{(n)}(\mathrm{z})\Big]
      \big[|\delta_{\mathrm{x}}\rangle\langle\delta_{\mathrm{y}}|\otimes C_n^{(\mathrm{x}-\mathrm{y})}\big]^*\\
  &= \big[|\delta_{\mathrm{x}}\rangle\langle\delta_{\mathrm{y}}|
          \otimes \big(C_n^{(\mathrm{x}-\mathrm{y})}\omega^{(n)}(\mathrm{y})\big)\big]
      \big[|\delta_{\mathrm{x}}\rangle\langle\delta_{\mathrm{y}}|\otimes C_n^{(\mathrm{x}-\mathrm{y})}\big]^*\\
  &= \big[|\delta_{\mathrm{x}}\rangle\langle\delta_{\mathrm{y}}|
          \otimes \big(C_n^{(\mathrm{x}-\mathrm{y})}\omega^{(n)}(\mathrm{y})\big)\big]
      \big[|\delta_{\mathrm{y}}\rangle\langle\delta_{\mathrm{x}}|\otimes C_n^{(\mathrm{x}-\mathrm{y})}\big]\\
  &= |\delta_{\mathrm{x}}\rangle\langle\delta_{\mathrm{x}}| \otimes
      \big(C_n^{(\mathrm{x}-\mathrm{y})}\omega^{(n)}(\mathrm{y})C_n^{(\mathrm{x}-\mathrm{y})}\big).
\end{split}
\end{equation*}
For $\mathrm{x}$, $\mathrm{y}\in \mathbb{Z}^d$ with $\mathrm{x}-\mathrm{y}\in \Lambda^d$,
in view of $M^{(n)}_{\mathrm{x},\mathrm{y}}=0$, we simply have
$M^{(n)}_{\mathrm{x},\mathrm{y}}\, \widetilde{\omega^{(n)}}\, {M^{(n)}_{\mathrm{x},\mathrm{y}}}^*=0$.
Thus, for each $\mathrm{y}\in \mathbb{Z}^d$, as a sum actually with a finite number of summands,
\begin{equation*}
\begin{split}
  \sum_{\mathrm{x}\in \mathbb{Z}^d} M^{(n)}_{\mathrm{x},\mathrm{y}}\, \widetilde{\omega^{(n)}}\,
       {M^{(n)}_{\mathrm{x},\mathrm{y}}}^*
  &= \sum_{\mathrm{x}-\mathrm{y}\in \Lambda^d} M^{(n)}_{\mathrm{x},\mathrm{y}}\, \widetilde{\omega^{(n)}}\,
       {M^{(n)}_{\mathrm{x},\mathrm{y}}}^*\\
  &= \sum_{\mathrm{x}-\mathrm{y}\in \Lambda^d}|\delta_{\mathrm{x}}\rangle\langle\delta_{\mathrm{x}}| \otimes
      \big(C_n^{(\mathrm{x}-\mathrm{y})}\omega^{(n)}(\mathrm{y})C_n^{(\mathrm{x}-\mathrm{y})}\big)\\
  & = \sum_{\varepsilon\in \Lambda^d}|\delta_{\mathrm{y}+\varepsilon}\rangle\langle\delta_{\mathrm{y}+\varepsilon}| \otimes
      \big(C_n^{(\varepsilon)}\omega^{(n)}(\mathrm{y})C_n^{(\varepsilon)}\big).
\end{split}
\end{equation*}
Therefore
\begin{equation*}
\begin{split}
\mathsf{M}^{(n)}\left(\widetilde{\omega^{(n)}}\right)
& = \sum_{\mathrm{y} \in \mathbb{Z}^d} \sum_{\mathrm{x} \in \mathbb{Z}^d}
     M^{(n)}_{\mathrm{x},\mathrm{y}}\,\widetilde{\omega^{(n)}}\, {M^{(n)}_{\mathrm{x},\mathrm{y}}}^*\\
& = \sum_{\mathrm{y} \in \mathbb{Z}^d}
    \sum_{\varepsilon\in \Lambda^d}|\delta_{\mathrm{y}+\varepsilon}\rangle\langle\delta_{\mathrm{y}+\varepsilon}|
   \otimes \big(C_n^{(\varepsilon)}\omega^{(n)}(\mathrm{y})C_n^{(\varepsilon)}\big)\\
& = \sum_{\mathrm{x} \in \mathbb{Z}^d}|\delta_{\mathrm{x}}\rangle\langle\delta_{\mathrm{x}}|\otimes
    \sum_{\varepsilon\in \Lambda^d}
    C_n^{(\varepsilon)}\omega^{(n)}(\mathrm{x}-\varepsilon)C_n^{(\varepsilon)}\\
& = \sum_{\mathrm{x} \in \mathbb{Z}^d}|\delta_{\mathrm{x}}\rangle\langle\delta_{\mathrm{x}}|\otimes
    \omega^{(n+1)}(\mathrm{x})\\
& = \widetilde{\omega^{(n+1)}}.
\end{split}
\end{equation*}
Here we make use of the strong convergence of the involved series.
\end{proof}

\section{Probability distribution of walk}\label{sec-3}

In the present section, we consider the $d$-dimensional open QBN walk from a perspective
of probability distribution. We first show that the $d$-dimensional open QBN walk admits the
``separability-preserving'' property, and then, based on this property, we calculate explicitly
the limit probability distribution of the walk for a wide range of choices of its initial state.

\subsection{Separability-preserving property}\label{subsec-3-1}

Let $\mathscr{T}(\mathcal{H})$ be the $1$-dimensional counterpart of the $d$-dimensional nucleus set $\mathscr{T}^{(d)}(\mathcal{H})$, namely
\begin{equation}\label{eq-3-1}
\mathscr{T}(\mathcal{H})=\Big\{ \rho\colon \mathbb{Z} \rightarrow \mathfrak{S}_+(\mathcal{H})\Bigm| \sum_{x=-\infty}^{\infty}\mathrm{Tr}[\rho(x)]=1\Big\}.
\end{equation}
Elements of $\mathscr{T}(\mathcal{H})$ are called $1$-dimensional nucleuses on $\mathcal{H}$.

\begin{lemma}\label{lem-3-1}\cite{w-w-r-t}
For each $n\geq 0$, there exists a mapping $\mathfrak{J}_n \colon \mathscr{T}(\mathcal{H}) \rightarrow \mathscr{T}(\mathcal{H})$ such that
\begin{equation}\label{eq-3-2}
  [\mathfrak{J}_n\rho](x) =  L_n\rho(x+1)L_n + R_n\rho(x-1)R_n,\quad x\in \mathbb{Z},\, \rho\in \mathscr{T}(\mathcal{H}).
\end{equation}
\end{lemma}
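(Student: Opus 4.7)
The plan is to mirror the proof of Theorem~\ref{thr-2-6} in the one-dimensional setting: the pair $\{L_n, R_n\}$ will play the role of the family $\{C_n^{(\varepsilon)}\}_{\varepsilon \in \Lambda^d}$, and the identity $L_n^2 + R_n^2 = I$ supplied by Lemma~\ref{lem-2-3} will replace the role played by Theorem~\ref{thr-2-4} in that argument.

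First, for each $\rho \in \mathscr{T}(\mathcal{H})$ I would define a candidate mapping $\rho' \colon \mathbb{Z} \to \mathfrak{S}_+(\mathcal{H})$ by
\begin{equation*}
\rho'(x) = L_n \rho(x+1) L_n + R_n \rho(x-1) R_n, \quad x \in \mathbb{Z}.
\end{equation*}
Since $L_n$ and $R_n$ are bounded self-adjoint operators on $\mathcal{H}$ and each $\rho(x \pm 1)$ lies in $\mathfrak{S}_+(\mathcal{H})$, both $L_n \rho(x+1) L_n$ and $R_n \rho(x-1) R_n$ are positive trace-class operators, so $\rho'(x) \in \mathfrak{S}_+(\mathcal{H})$ for every $x \in \mathbb{Z}$.

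Next, I would verify the normalization $\sum_{x \in \mathbb{Z}} \mathrm{Tr}[\rho'(x)] = 1$. Using cyclicity of the trace together with Lemma~\ref{lem-2-3}, for each $y \in \mathbb{Z}$ one has
\begin{equation*}
\mathrm{Tr}[L_n \rho(y) L_n] + \mathrm{Tr}[R_n \rho(y) R_n] = \mathrm{Tr}[\rho(y)(L_n^2 + R_n^2)] = \mathrm{Tr}[\rho(y)].
\end{equation*}
Because all summands are nonnegative, Tonelli's theorem permits unrestricted rearrangement of the double series; reindexing the two pieces via $y = x+1$ and $y = x-1$ respectively and applying the identity just displayed yields $\sum_{x \in \mathbb{Z}} \mathrm{Tr}[\rho'(x)] = \sum_{y \in \mathbb{Z}} \mathrm{Tr}[\rho(y)] = 1$, the final equality following from $\rho \in \mathscr{T}(\mathcal{H})$. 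Hence $\rho' \in \mathscr{T}(\mathcal{H})$, and setting $\mathfrak{J}_n \rho := \rho'$ gives a mapping $\mathfrak{J}_n \colon \mathscr{T}(\mathcal{H}) \to \mathscr{T}(\mathcal{H})$ with the stated action.

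No step looks particularly obstructive: the only nontrivial algebraic input is the identity $L_n^2 + R_n^2 = I$, and the double-series manipulation is routine thanks to positivity of every term. This is consistent with the fact that Lemma~\ref{lem-3-1} is cited as an earlier result from \cite{w-w-r-t} and is essentially the $d=1$ specialization of Theorem~\ref{thr-2-6}, where $C_n^{(-1)}$ and $C_n^{(+1)}$ collapse to $L_n$ and $R_n$ respectively.
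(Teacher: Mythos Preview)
Your proof is correct. The paper does not actually supply its own proof of Lemma~\ref{lem-3-1}; it is quoted from \cite{w-w-r-t}. Your argument is precisely the $d=1$ specialization of the paper's proof of Theorem~\ref{thr-2-6}, with $L_n, R_n$ in place of the $C_n^{(\varepsilon)}$ and Lemma~\ref{lem-2-3} supplying the identity $L_n^2+R_n^2=I$ that stands in for Theorem~\ref{thr-2-4}, so it matches the method the paper itself employs for the higher-dimensional analogue.
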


By this Lemma, we find that $\left(\prod_{k=0}^n \mathfrak{J}_k\right)\rho \in \mathscr{T}(\mathcal{H})$ for all $n\geq 0$ whenever $\rho \in \mathscr{T}(\mathcal{H})$,
where $\prod_{k=0}^n \mathfrak{J}_k$ means the composition of mappings $\mathfrak{J}_0$, $\mathfrak{J}_1$, $\cdots$, $\mathfrak{J}_n$.

\begin{theorem}\label{thr-3-2}
Let $\rho_1$, $\rho_2$, $\cdots$, $\rho_d\in \mathscr{T}(\mathcal{H})$ be $1$-dimensional nucleuses on $\mathcal{H}$. Define
\begin{equation}\label{eq}
  \omega(\mathrm{x}) = \mathsf{K}\Big(\bigotimes_{j=1}^d \rho_j(x_j) \Big)\mathsf{K}^{-1},\quad \mathrm{x}=(x_1, x_2, \cdots, x_d)\in \mathbb{Z}^d.
\end{equation}
Then $\omega \in \mathscr{T}^{(d)}(\mathcal{H})$, namely $\omega$ is a $d$-dimensional nucleus on $\mathcal{H}$.
\end{theorem}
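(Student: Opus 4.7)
The plan is to verify the two defining conditions of Definition~\ref{def-2-2} directly: that $\omega(\mathrm{x}) \in \mathfrak{S}_+(\mathcal{H})$ for every $\mathrm{x} \in \mathbb{Z}^d$, and that $\sum_{\mathrm{x} \in \mathbb{Z}^d} \mathrm{Tr}[\omega(\mathrm{x})] = 1$.

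First I would handle the positivity and trace-class conditions pointwise. Since each $\rho_j \in \mathscr{T}(\mathcal{H})$, we have $\rho_j(x_j) \in \mathfrak{S}_+(\mathcal{H})$ for every $x_j \in \mathbb{Z}$. A standard fact from the theory of trace class operators (see \cite{simon}) is that the tensor product of finitely many positive trace class operators is a positive trace class operator on the tensor product Hilbert space; hence $\bigotimes_{j=1}^d \rho_j(x_j) \in \mathfrak{S}_+(\mathcal{H}^{\otimes d})$. Unitary conjugation preserves positivity and the trace class property, so $\omega(\mathrm{x}) = \mathsf{K}\bigl(\bigotimes_{j=1}^d \rho_j(x_j)\bigr)\mathsf{K}^{-1} \in \mathfrak{S}_+(\mathcal{H})$ for each $\mathrm{x} \in \mathbb{Z}^d$.

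Next I would compute the total trace. Using the unitary invariance of the trace together with the multiplicativity of the trace over tensor products,
\begin{equation*}
  \mathrm{Tr}[\omega(\mathrm{x})]
  = \mathrm{Tr}\Bigl[\bigotimes_{j=1}^d \rho_j(x_j)\Bigr]
  = \prod_{j=1}^d \mathrm{Tr}[\rho_j(x_j)].
\end{equation*}
Since every term $\mathrm{Tr}[\rho_j(x_j)]$ is nonnegative, Tonelli's theorem (or the standard interchange of sum and product for nonnegative series) permits me to write
\begin{equation*}
  \sum_{\mathrm{x} \in \mathbb{Z}^d} \mathrm{Tr}[\omega(\mathrm{x})]
  = \sum_{x_1 \in \mathbb{Z}} \cdots \sum_{x_d \in \mathbb{Z}} \prod_{j=1}^d \mathrm{Tr}[\rho_j(x_j)]
  = \prod_{j=1}^d \Bigl(\sum_{x_j \in \mathbb{Z}} \mathrm{Tr}[\rho_j(x_j)]\Bigr)
  = \prod_{j=1}^d 1 = 1,
\end{equation*}
where each inner sum equals $1$ because $\rho_j \in \mathscr{T}(\mathcal{H})$. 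Combined with the previous paragraph, this shows $\omega \in \mathscr{T}^{(d)}(\mathcal{H})$.

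There is no substantial obstacle here; the only subtlety is to record explicitly that the trace factorizes across tensor products of trace class operators and that the interchange of the nested infinite sums with the product is legitimate because every summand is nonnegative. The unitary $\mathsf{K}$ plays a purely cosmetic role, serving only to transport the tensor product operator on $\mathcal{H}^{\otimes d}$ back to an operator on $\mathcal{H}$ without disturbing positivity or the trace.
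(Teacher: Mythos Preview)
Your proof is correct and follows essentially the same route as the paper's own proof: compute $\mathrm{Tr}[\omega(\mathrm{x})]$ via unitary invariance and multiplicativity of the trace, then factor the sum over $\mathbb{Z}^d$ into a product of $d$ sums each equal to $1$. In fact your write-up is slightly more complete, since you also verify explicitly that $\omega(\mathrm{x})\in\mathfrak{S}_+(\mathcal{H})$, a point the paper's proof leaves implicit.
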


\begin{proof}
\begin{equation*}
\sum_{\mathrm{x}\in \mathbb{Z}^d}\mathrm{Tr}[\omega(\mathrm{x})]
= \sum_{\mathrm{x}\in \mathbb{Z}^d}\mathrm{Tr}\Big[\bigotimes_{j=1}^d \rho_j(x_j) \Big]
= \sum_{\mathrm{x}\in \mathbb{Z}^d} \prod_{j=1}^d\mathrm{Tr} [\rho_j(x_j)]
= \prod_{j=1}^d\sum_{x_j\in \mathbb{Z}}\mathrm{Tr} [\rho_j(x_j)]
= \prod_{j=1}^d 1 =1.
\end{equation*}
\end{proof}

\begin{definition}\label{def-3-1}
A $d$-dimensional nucleus $\omega \in \mathscr{T}^{(d)}(\mathcal{H})$ is said to
 to be separable if there exist $1$-dimensional nucleuses
$\rho_1$, $\rho_2$, $\cdots$, $\rho_d\in \mathscr{T}(\mathcal{H})$ such that
\begin{equation}\label{eq}
  \omega(\mathrm{x}) = \mathsf{K}\Big(\bigotimes_{j=1}^d \rho_j(x_j) \Big)\mathsf{K}^{-1},\quad \mathrm{x}=(x_1, x_2, \cdots, x_d)\in \mathbb{Z}^d.
\end{equation}
A state $\widetilde{\omega^{(n)}}$ of the $d$-dimensional open QBN walk is said to be separable
if its nucleus $\omega^{(n)}$ is separable.
\end{definition}

The next theorem shows that all the states of the $d$-dimensional open QBN walk are separable provided
its initial state is separable. In other words, the $d$-dimensional open QBN walk
has the ``separability-preserving'' property.

\begin{theorem}\label{thr-3-3}
Let $\big(\omega^{(n)}\big)_{n\geq 0}$ be the nucleus sequence of the states of the $d$-dimensional open QBN walk. Suppose that
\begin{equation}\label{eq-3-5}
  \omega^{(0)}(\mathrm{x}) = \mathsf{K}\Big(\bigotimes_{j=1}^d \rho_j^{(0)}(x_j) \Big)\mathsf{K}^{-1},\quad \mathrm{x}=(x_1, x_2, \cdots, x_d)\in \mathbb{Z}^d,
\end{equation}
where $\rho_1^{(0)}$, $\rho_2^{(0)}$, $\cdots$, $\rho_d^{(0)}\in \mathscr{T}(\mathcal{H})$. Then, for all $n\geq 1$, $\omega^{(n)}$ has a representation of the following form
\begin{equation}\label{eq-3-6}
  \omega^{(n)}(\mathrm{x}) = \mathsf{K}\Big(\bigotimes_{j=1}^d \rho_j^{(n)}(x_j) \Big)\mathsf{K}^{-1},\quad \mathrm{x}=(x_1, x_2, \cdots, x_d)\in \mathbb{Z}^d,
\end{equation}
where
\begin{equation}\label{eq-3-7}
\rho_j^{(n)} = \Big(\prod_{k=0}^{n-1}\mathfrak{J}_k\Big)\rho_j^{(0)}
\end{equation}
for $j=1$, $2$, $\cdots$, $d$.
\end{theorem}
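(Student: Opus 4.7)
The natural approach is induction on $n$, with the evolution equation (\ref{eq-2-20}) as the driver. The base case $n=0$ is precisely the hypothesis (\ref{eq-3-5}). For the inductive step, assuming $\omega^{(n)}$ has the tensor-product form (\ref{eq-3-6}) with factors $\rho_j^{(n)} = \bigl(\prod_{k=0}^{n-1} \mathfrak{J}_k\bigr) \rho_j^{(0)}$, I would substitute this into the one-step recursion
\begin{equation*}
  \omega^{(n+1)}(\mathrm{x}) = \sum_{\varepsilon \in \Lambda^d} C_n^{(\varepsilon)}\, \omega^{(n)}(\mathrm{x}-\varepsilon)\, C_n^{(\varepsilon)}
\end{equation*}
and use the explicit form $C_n^{(\varepsilon)} = \mathsf{K}\bigl(\bigotimes_{j=1}^d B_n^{(\varepsilon_j)}\bigr)\mathsf{K}^{-1}$ from Definition \ref{def-2-1} to move the whole computation inside $\mathsf{K}(\cdot)\mathsf{K}^{-1}$. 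The inverse $\mathsf{K}^{-1}$ from $C_n^{(\varepsilon)}$ cancels the outer $\mathsf{K}$ from $\omega^{(n)}(\mathrm{x}-\varepsilon)$, and similarly on the right; this reduces each summand to
\begin{equation*}
  \mathsf{K}\Bigl[\bigotimes_{j=1}^d B_n^{(\varepsilon_j)}\, \rho_j^{(n)}(x_j - \varepsilon_j)\, B_n^{(\varepsilon_j)}\Bigr]\mathsf{K}^{-1}.
\end{equation*}

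The key algebraic step is the factorization of the sum over $\Lambda^d$ into $d$ independent one-dimensional sums, using the multilinearity of the tensor product:
\begin{equation*}
  \sum_{\varepsilon \in \Lambda^d} \bigotimes_{j=1}^d B_n^{(\varepsilon_j)} \rho_j^{(n)}(x_j-\varepsilon_j) B_n^{(\varepsilon_j)}
  = \bigotimes_{j=1}^d \sum_{\varepsilon_j \in \Lambda} B_n^{(\varepsilon_j)} \rho_j^{(n)}(x_j-\varepsilon_j) B_n^{(\varepsilon_j)}.
\end{equation*}
Unpacking the definition $B_n^{(-1)} = L_n$, $B_n^{(+1)} = R_n$, each one-dimensional sum becomes $L_n \rho_j^{(n)}(x_j+1) L_n + R_n \rho_j^{(n)}(x_j-1) R_n$, which is exactly $[\mathfrak{J}_n \rho_j^{(n)}](x_j)$ by Lemma \ref{lem-3-1}. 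Setting $\rho_j^{(n+1)} = \mathfrak{J}_n \rho_j^{(n)} = \bigl(\prod_{k=0}^{n} \mathfrak{J}_k\bigr) \rho_j^{(0)}$ closes the induction and delivers (\ref{eq-3-6})--(\ref{eq-3-7}) at stage $n+1$.

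There is no real obstacle, only bookkeeping: the one point requiring care is the cancellation of $\mathsf{K}$ and $\mathsf{K}^{-1}$ in a product of three tensor-form operators, and the interchange of the finite sum over $\Lambda^d$ with the tensor product. Both are standard, and the tensor-product factorization works cleanly because the summation index set is itself a Cartesian product $\Lambda^d$ and the translation $\mathrm{x} - \varepsilon$ acts coordinate-wise. After the induction is completed, one could also remark that $\rho_j^{(n)} \in \mathscr{T}(\mathcal{H})$ for every $n$ (from the remark following Lemma \ref{lem-3-1}), so Theorem \ref{thr-3-2} reconfirms that the right-hand side of (\ref{eq-3-6}) indeed lies in $\mathscr{T}^{(d)}(\mathcal{H})$, consistent with the fact that $\omega^{(n)}$ is a nucleus.
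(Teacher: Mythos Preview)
Your proposal is correct and follows essentially the same approach as the paper: both arguments hinge on expanding $C_n^{(\varepsilon)}$ via Definition~\ref{def-2-1}, cancelling the $\mathsf{K}$'s, factoring the sum over $\Lambda^d$ as a tensor product of one-dimensional sums, and identifying each factor with $[\mathfrak{J}_n\rho_j^{(n)}](x_j)$. The only cosmetic difference is that the paper first defines a candidate sequence $\omega'^{(n)}$ with the desired tensor form and then verifies it satisfies the same recursion and initial condition as $\omega^{(n)}$, whereas you run the induction directly on $\omega^{(n)}$; the computations are identical.
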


\begin{proof}
According to Lemma~\ref{lem-3-1}, $\rho_j^{(n)} \in \mathscr{T}(\mathcal{H})$ for each $j$ with $1\leq j \leq d$
and each $n\geq 0$. Thus, by Theorem~\ref{thr-3-2}, there exists a sequence $\big(\omega'^{(n)}\big)_{n\geq 0}$
in $\mathscr{T}^{(d)}(\mathcal{H})$ such that
\begin{equation}\label{eq-3-8}
  \omega'^{(n)}(\mathrm{x}) = \mathsf{K}\Big(\bigotimes_{j=1}^d \rho_j^{(n)}(x_j) \Big)\mathsf{K}^{-1},\quad
  \mathrm{x}=(x_1, x_2, \cdots, x_d)\in \mathbb{Z}^d,\ \ n\geq 0.
\end{equation}
In particular, we have
\begin{equation*}
  \omega'^{(0)}(\mathrm{x}) = \mathsf{K}\Big(\bigotimes_{j=1}^d \rho_j^{(0)}(x_j) \Big)\mathsf{K}^{-1},\quad
  \mathrm{x}=(x_1, x_2, \cdots, x_d)\in \mathbb{Z}^d,
\end{equation*}
which, together with the assumption given in (\ref{eq-3-5}), implies that $\omega^{(0)}=\omega'^{(0)}$.

On the other hand, for $n\geq 1$, by (\ref{eq-3-7}) we have $\rho_j^{(n)}= \mathfrak{J}_{n-1}\rho_j^{(n-1)}$,
$1\leq j\leq d$,
which together with Lemma~\ref{lem-3-1} implies that
\begin{equation*}
  \rho_j^{(n)}(x_j)
  = \sum_{\varepsilon_j\in \Lambda}B_{n-1}^{(\varepsilon_j)}\rho_j^{(n-1)}(x_j-\varepsilon_j)B_{n-1}^{(\varepsilon_j)},\quad
  x_j \in \mathbb{Z},\ \ 1\leq j \leq d.
\end{equation*}
Here $B_{n-1}^{(-1)}=L_{n-1}$ and $B_{n-1}^{(+1)}=R_{n-1}$ as indicated in Definition~\ref{def-2-1}.
Taking tensor product gives
\begin{equation*}
 \bigotimes_{j=1}^d \rho_j^{(n)}(x_j)
  = \sum_{\varepsilon\in \Lambda^d}
     \Big(\bigotimes_{j=1}^dB_{n-1}^{(\varepsilon_j)}\Big)
     \Big(\bigotimes_{j=1}^d\rho_j^{(n-1)}(x_j-\varepsilon_j)\Big)
     \Big(\bigotimes_{j=1}^dB_{n-1}^{(\varepsilon_j)}\Big),
\end{equation*}
$\mathrm{x}=(x_1, x_2, \cdots, x_d) \in \mathbb{Z}^d$,\ $n\geq 1$,
where $\varepsilon=(\varepsilon_1, \varepsilon_2, \cdots, \varepsilon_d)$.
This, together with (\ref{eq-3-8}) and Definition~\ref{def-2-1}, yields
\begin{equation*}
 \omega'^{(n)}(\mathrm{x})
  = \sum_{\varepsilon\in \Lambda^d}
     C_{n-1}^{(\varepsilon)}
     \omega'^{(n-1)}(\mathrm{x}-\varepsilon)
     C_{n-1}^{(\varepsilon)},\quad \mathrm{x}=(x_1, x_2, \cdots, x_d) \in \mathbb{Z}^d,\ n\geq 1,
\end{equation*}
namely
\begin{equation*}
 \omega'^{(n)}
  = \mathfrak{J}_{n-1}^{(d)}\omega'^{(n-1)},\quad n\geq 1,
\end{equation*}
which together with (\ref{eq-2-19}) and $\omega^{(0)}=\omega'^{(0)}$ implies that $\omega^{(n)}=\omega'^{(n)}$ for all
$n\geq 0$, which together with (\ref{eq-3-8}) gives (\ref{eq-3-6}).
\end{proof}

As an immediate consequence of the previous theorem, we have the following corollary,
which gives a formula for calculating the probability distributions of the $d$-dimensional open QBN walk.

\begin{corollary}\label{coro-3-4}
Let the nucleus $\omega^{(0)}$ of the initial state of the $d$-dimensional open QBN walk take the following form
\begin{equation}\label{eq-3-9}
  \omega^{(0)}(\mathrm{x}) = \mathsf{K}\Big(\bigotimes_{j=1}^d \rho_j^{(0)}(x_j) \Big)\mathsf{K}^{-1},\quad \mathrm{x}=(x_1, x_2, \cdots, x_d)\in \mathbb{Z}^d,
\end{equation}
where $\rho_1^{(0)}$, $\rho_2^{(0)}$, $\cdots$, $\rho_d^{(0)}\in \mathscr{T}(\mathcal{H})$. Then, at time $n\geq 1$, the walk has a probability distribution of the following form
\begin{equation}\label{eq-3-10}
  \mathrm{Tr}\big[\omega^{(n)}(\mathrm{x})\big] = \prod_{j=1}^d \mathrm{Tr}\big[\rho_j^{(n)}(x_j)\big],\quad \mathrm{x}=(x_1, x_2,\cdots, x_d)\in \mathbb{Z}^d,
\end{equation}
where
\begin{equation}\label{eq-3-11}
\rho_j^{(n)} = \Big(\prod_{k=0}^{n-1}\mathfrak{J}_k\Big)\rho_j^{(0)}
\end{equation}
for $j=1$, $2$, $\cdots$, $d$.
\end{corollary}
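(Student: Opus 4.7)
The plan is to derive Corollary~\ref{coro-3-4} as an essentially direct consequence of Theorem~\ref{thr-3-3}, with only two elementary trace identities doing the extra work.

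First I would invoke Theorem~\ref{thr-3-3} applied to the hypothesis (\ref{eq-3-9}): since the initial nucleus $\omega^{(0)}$ is separable with $1$-dimensional factors $\rho_1^{(0)},\dots,\rho_d^{(0)}\in \mathscr{T}(\mathcal{H})$, the theorem guarantees that for every $n\geq 1$,
\begin{equation*}
  \omega^{(n)}(\mathrm{x}) = \mathsf{K}\Big(\bigotimes_{j=1}^d \rho_j^{(n)}(x_j)\Big)\mathsf{K}^{-1},\qquad \mathrm{x}=(x_1,\dots,x_d)\in \mathbb{Z}^d,
\end{equation*}
with $\rho_j^{(n)}$ defined as in (\ref{eq-3-11}). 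This already reduces the problem to a purely computational trace identity.

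Next I would take traces on both sides. Because $\mathsf{K}\colon \mathcal{H}^{\otimes d}\to \mathcal{H}$ is a unitary isomorphism, conjugation by $\mathsf{K}$ preserves the trace of any trace class operator, so
\begin{equation*}
\mathrm{Tr}\big[\omega^{(n)}(\mathrm{x})\big]
= \mathrm{Tr}\Big[\bigotimes_{j=1}^d \rho_j^{(n)}(x_j)\Big].
\end{equation*}
Finally, applying the standard multiplicativity of trace under tensor products of trace class operators (exactly the step already used in the proof of Theorem~\ref{thr-3-2}) gives $\mathrm{Tr}\big[\bigotimes_{j=1}^d\rho_j^{(n)}(x_j)\big]=\prod_{j=1}^d \mathrm{Tr}[\rho_j^{(n)}(x_j)]$, which is (\ref{eq-3-10}).

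There is no real obstacle here: the substantive content is absorbed entirely into Theorem~\ref{thr-3-3}, which in turn rests on the evolution identity (\ref{eq-2-20}) and the factorisation $C_n^{(\varepsilon)}=\mathsf{K}(\bigotimes_j B_n^{(\varepsilon_j)})\mathsf{K}^{-1}$. The only point worth stating carefully in the write-up is the unitary invariance of the trace under conjugation by $\mathsf{K}$, together with the tensor-product trace identity; both are standard facts about trace class operators and require no further argument beyond a citation of the setup in Subsection~\ref{subsec-2-2}.
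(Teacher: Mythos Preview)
Your proposal is correct and matches the paper's approach exactly: the paper states this corollary as ``an immediate consequence'' of Theorem~\ref{thr-3-3} without giving a separate proof, and the trace computation you spell out (unitary invariance under conjugation by $\mathsf{K}$, then multiplicativity over tensor products) is precisely the calculation already carried out in the proof of Theorem~\ref{thr-3-2}.
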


\subsection{Limit probability distribution}\label{subsec-3-2}

Let $\rho$ be a $1$-dimensional nucleus on $\mathcal{H}$, namely $\rho\in \mathscr{T}(\mathcal{H})$.
A sequence $\big(\rho^{(n)}\big)_{n\geq 0}$ of $1$-dimensional nucleuses on $\mathcal{H}$ is said to be generated by $\rho$
and $\big(\mathfrak{J}_n\big)_{n\geq 0}$ if $\rho^{(0)}=\rho$ and
\begin{equation*}
\rho^{(n+1)} = \mathfrak{J}_n\rho^{(n)},\quad n\geq 0.
\end{equation*}
We note that if a sequence $\big(\rho^{(n)}\big)_{n\geq 0}$ is generated by $\rho$
and $\big(\mathfrak{J}_n\big)_{n\geq 0}$, then $\rho^{(0)}=\rho$ and
\begin{equation*}
\rho^{(n)} = \Big(\prod_{k=0}^{n-1}\mathfrak{J}_k\Big)\rho^{(0)},\quad n\geq 1,
\end{equation*}
where $\prod_{k=0}^{n-1}\mathfrak{J}_k$ means the composition of mappings $\{\,\mathfrak{J}_k \mid 0\leq k \leq n-1\,\}$.

\begin{definition}\label{def-3-2}
A $1$-dimensional nucleus $\rho$ on $\mathcal{H}$ is said to be regular if
the sequence $\big(\rho^{(n)}\big)_{n\geq 0}$ generated by $\rho$
and $\big(\mathfrak{J}_n\big)_{n\geq 0}$ satisfies that
\begin{equation*}
  \lim_{n\to \infty}\sum_{x\in \mathbb{Z}}\mathrm{e}^{\frac{\mathrm{i}tx}{\sqrt{n}}}\mathrm{Tr}\big[\rho^{(n)}(x)\big]
  = \mathrm{e}^{-\frac{t^2}{2}},\quad t\in \mathbb{R}.
\end{equation*}
\end{definition}

The next example shows that there exist infinitely many $1$-dimensional nucleuses on $\mathcal{H}$ that are regular.

\begin{example}\label{exam-3-1}
Let $\sigma\in \Gamma$ and $Z_{\sigma}$ the corresponding basis vector of the canonical ONB of $\mathcal{H}$.
Define
\begin{equation}\label{eq-3-12}
  \rho_{\sigma}(x)=\left\{
            \begin{array}{ll}
              |Z_{\sigma}\rangle\!\langle Z_{\sigma}|, & \hbox{$x=0$;} \\
              0, & \hbox{$x\neq 0$, $x\in \mathbb{Z}$,}
            \end{array}
          \right.
\end{equation}
where $|Z_{\sigma}\rangle\!\langle Z_{\sigma}|$ is the Dirac operator associated with the basis vector $Z_{\sigma}$.
Then $\rho_{\sigma}$ is a regular $1$-dimensional nucleus on $\mathcal{H}$.
\end{example}

\begin{proof}
Clearly $\rho_{\sigma}$ is a $1$-dimensional nucleus on $\mathcal{H}$. Next we show that it is also regular.
To this end, we consider the space $l^2(\mathbb{Z},\mathcal{H})$ of square summable $\mathcal{H}$-valued functions defined on $\mathbb{Z}$,
which is endowed the usual inner product and norm. It can be verified (see \cite{w-r-t} and references therein) that for each $n\geq 0$,
there exists a unitary operator $\mathcal{U}_n$ on $l^2(\mathbb{Z}, \mathcal{H})$ such that
\begin{equation}\label{eq-3-13}
  (\mathcal{U}_n\Phi)(x) = R_n\Phi(x-1) + L_n\Phi(x+1),\quad x\in \mathbb{Z},\; \Phi\in l^2(\mathbb{Z}, \mathcal{H}).
\end{equation}
Now define $\Phi_n= \big(\prod_{k=0}^{n-1}\mathcal{U}_k\big)\Phi_0$, $n\geq 1$, where $\Phi_0\in l^2(\mathbb{Z}, \mathcal{H})$
is taken as
\begin{equation*}
  \Phi_0(x)=\left\{
              \begin{array}{ll}
                Z_{\sigma}, & \hbox{$x=0$;} \\
                0, & \hbox{$x\neq 0$, $x\in \mathbb{Z}$.}
              \end{array}
            \right.
\end{equation*}
Then, by a result recently proven by Wang \textit{et al} (see \cite{w-r-t} for details), we have
\begin{equation}\label{eq-3-14}
\|\Phi_n(x)\|^2=
 \left\{
   \begin{array}{ll}
     \frac{1}{2^n}\binom{n}{j}, & \hbox{$x=n-2j$, $0\leq j \leq n$;} \\
     0, & \hbox{otherwise,}
   \end{array}
 \right.
\end{equation}
for all $n\geq 1$. On the other hand, let $\big(\rho^{(n)}\big)_{n\geq 0}$ be the $1$-dimensional nucleus sequence on
$\mathcal{H}$ generated by $\rho_{\sigma}$ and $\big(\mathfrak{J}_n\big)_{n\geq 0}$.
Then, by a careful check, we find that
\begin{equation*}
  \rho^{(0)}(x) = \rho_{\sigma}(x)=|\Phi_0(x)\rangle\langle \Phi_0(x)|,\quad x\in \mathbb{Z}.
\end{equation*}
This, together with Theorem 4.2 of \cite{w-w-r-t}, as well as (\ref{eq-3-14}), implies that
\begin{equation}\label{eq-4-15}
\mathrm{Tr}[\rho^{(n)}(x)]=
 \left\{
   \begin{array}{ll}
     \frac{1}{2^n}\binom{n}{j}, & \hbox{$x=n-2j$, $0\leq j \leq n$;} \\
     0, & \hbox{otherwise.}
   \end{array}
 \right.
\end{equation}
Consequently, by a careful calculation, we finally come to
\begin{equation*}
  \lim_{n\to \infty}\sum_{x\in \mathbb{Z}}\mathrm{e}^{\frac{\mathrm{i}tx}{\sqrt{n}}}\mathrm{Tr}\big[\rho^{(n)}(x)\big]
  = \lim_{n\to \infty}\cos^n\frac{t}{\sqrt{n}}
  =\mathrm{e}^{-\frac{t^2}{2}},\quad t\in \mathbb{R},
\end{equation*}
which means that $\rho_{\sigma}$ is regular.
\end{proof}

The next theorem shows that, for a wide range of choices of its initial state, the $d$-dimensional open QBN walk has
a limit probability distribution of $d$-dimensional Gauss type.

\begin{theorem}\label{thr-3-5}
Let $\big(\widetilde{\omega^{(n)}}\big)_{n\geq 0}$ be the state sequence of the $d$-dimensional open QBN walk
and $\omega^{(n)}$ the nucleus of $\widetilde{\omega^{(n)}}$, $n\geq 0$. Suppose that
the nucleus $\omega^{(0)}$ of the initial state $\widetilde{\omega^{(0)}}$ takes the following form
\begin{equation}\label{eq-3-16}
  \omega^{(0)}(\mathrm{x}) = \mathsf{K}\Big(\bigotimes_{j=1}^d \rho_j^{(0)}(x_j) \Big)\mathsf{K}^{-1},\quad \mathrm{x}=(x_1, x_2, \cdots, x_d)\in \mathbb{Z}^d,
\end{equation}
where $\big\{\,\rho_j^{(0)} \mid 1\leq j \leq d \,\big\}$ are regular $1$-dimensional nucleuses on $\mathcal{H}$.
For $n\geq 0$, let $X_n$ be a $d$-dimensional random vector with probability distribution
\begin{equation}\label{eq-3-17}
  P\{X_n = \mathrm{x}\} = \mathrm{Tr}[\omega^{(n)}(\mathrm{x})],\quad \mathrm{x}\in \mathbb{Z}^d.
\end{equation}
Then
\begin{equation*}
\frac{X_n}{\sqrt{n}}\Longrightarrow N(\mathbf{0},I_{d\times d}),
\end{equation*}
namely $\frac{X_n}{\sqrt{n}}$ converges in law to
the $d$-dimensional standard Gauss distribution as $n\rightarrow \infty$.
\end{theorem}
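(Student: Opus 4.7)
The plan is to reduce the $d$-dimensional convergence claim to a product of one-dimensional statements, using the separability-preserving property of the walk together with the multivariate Lévy continuity theorem.

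First, I would invoke Corollary~\ref{coro-3-4} directly. Since the initial nucleus $\omega^{(0)}$ has the separable form (\ref{eq-3-16}), the probability distribution of $X_n$ factors as
\begin{equation*}
P\{X_n=\mathrm{x}\} = \mathrm{Tr}\big[\omega^{(n)}(\mathrm{x})\big] = \prod_{j=1}^d \mathrm{Tr}\big[\rho_j^{(n)}(x_j)\big],
\quad \mathrm{x}=(x_1,\dots,x_d)\in \mathbb{Z}^d,
\end{equation*}
where $\rho_j^{(n)}=\bigl(\prod_{k=0}^{n-1}\mathfrak{J}_k\bigr)\rho_j^{(0)}$. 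In probabilistic language, this tells us that the components $X_n^{(1)},\dots,X_n^{(d)}$ of $X_n$ are independent $\mathbb{Z}$-valued random variables, with $X_n^{(j)}$ distributed according to the one-dimensional probability law $x_j\mapsto \mathrm{Tr}[\rho_j^{(n)}(x_j)]$.

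Next, I would compute the characteristic function of $X_n/\sqrt{n}$. Because the joint distribution is a product measure, for any $t=(t_1,\dots,t_d)\in \mathbb{R}^d$ the absolutely convergent sum splits as
\begin{equation*}
\varphi_{X_n/\sqrt{n}}(t)
= \sum_{\mathrm{x}\in \mathbb{Z}^d} \mathrm{e}^{\mathrm{i}\langle t,\mathrm{x}\rangle/\sqrt{n}}\prod_{j=1}^d \mathrm{Tr}\big[\rho_j^{(n)}(x_j)\big]
= \prod_{j=1}^d\Bigl(\sum_{x_j\in \mathbb{Z}}\mathrm{e}^{\mathrm{i}t_j x_j/\sqrt{n}}\mathrm{Tr}\big[\rho_j^{(n)}(x_j)\big]\Bigr).
\end{equation*}
The regularity hypothesis on each $\rho_j^{(0)}$ (Definition~\ref{def-3-2}) ensures that every factor in this product converges to $\mathrm{e}^{-t_j^2/2}$ as $n\to\infty$. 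Since $d$ is fixed, multiplying the $d$ one-dimensional limits gives
\begin{equation*}
\lim_{n\to\infty}\varphi_{X_n/\sqrt{n}}(t) = \prod_{j=1}^d \mathrm{e}^{-t_j^2/2} = \mathrm{e}^{-\|t\|^2/2},\quad t\in \mathbb{R}^d,
\end{equation*}
which is precisely the characteristic function of $N(\mathbf{0},I_{d\times d})$.

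Finally, I would invoke the multivariate Lévy continuity theorem: pointwise convergence of characteristic functions to a function that is continuous at the origin yields convergence in distribution. This delivers $X_n/\sqrt{n}\Longrightarrow N(\mathbf{0},I_{d\times d})$. There is no genuine obstacle here — the heavy lifting has already been done in Corollary~\ref{coro-3-4} and the definition of regularity. The only point that deserves care is justifying the factorisation of the characteristic function; this follows from Fubini because the joint law is a product of probability measures and the integrand has modulus one. Thus the theorem is a clean corollary of the separability-preserving structure combined with the one-dimensional regularity assumption.
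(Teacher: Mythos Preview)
Your proposal is correct and follows essentially the same approach as the paper: invoke Corollary~\ref{coro-3-4} to factorise the characteristic function of $X_n/\sqrt n$ into a product over the $d$ coordinates, apply the regularity hypothesis to each factor, and conclude via L\'evy's continuity theorem. The paper's proof is marginally less explicit about the independence interpretation and the name of the continuity theorem, but the argument is the same.
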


\begin{proof}
Let $n\geq 1$.
Consider the characteristic function $C_{\frac{X_n}{\sqrt{n}}}(\mathbf{t})$ of the random vector $\frac{X_n}{\sqrt{n}}$.
By definition, we have
\begin{equation}\label{eq-3-18}
  C_{\frac{X_n}{\sqrt{n}}}(\mathbf{t})
= \sum_{\mathbf{x}\in \mathbb{Z}^d}\mathrm{e}^{\frac{\mathrm{i}}{\sqrt{n}}\sum_{j=1}^dt_jx_j}
\mathrm{Tr}[\omega^{(n)}(\mathrm{x})],\quad \mathbf{t}=(t_1, t_2,\cdots, t_d)\in \mathbb{R}^d,
\end{equation}
where $\mathbf{x}=(x_1, x_2,\cdots, x_d)$. Using Corollary~\ref{coro-3-4} gives
\begin{equation*}
  C_{\frac{X_n}{\sqrt{n}}}(\mathbf{t})
= \prod_{j=1}^d \Big(\sum_{x_j\in \mathbb{Z}}\mathrm{e}^{\frac{\mathrm{i}t_jx_j}{\sqrt{n}}}
  \mathrm{Tr}\big[\rho_j^{(n)}(x_j)\big]\Big), \quad \mathbf{t}=(t_1, t_2,\cdots, t_d)\in \mathbb{R}^d,
\end{equation*}
where
\begin{equation*}
\rho_j^{(n)} = \Big(\prod_{k=0}^{n-1}\mathfrak{J}_k\Big)\rho_j^{(0)},\quad 1\leq j\leq d.
\end{equation*}
Now consider $\lim_{n\to \infty}C_{\frac{X_n}{\sqrt{n}}}(\mathbf{t})$.
For each $j$, since $\rho_j^{(0)}$ is regular and $\big(\rho_j^{(n)}\big)_{n\geq 0}$ is generated by $\rho_j^{(0)}$ and $\big(\mathfrak{J}_n\big)_{n\geq 0}$, we have
\begin{equation*}
  \lim_{n\to \infty}\sum_{x_j\in \mathbb{Z}}\mathrm{e}^{\frac{\mathrm{i}t_jx_j}{\sqrt{n}}} \mathrm{Tr}\big[\rho_j^{(n)}(x_j)\big]
   = e^{-\frac{t_j^2}{2}},\quad t_j \in \mathbb{R},
\end{equation*}
which implies that
\begin{equation*}
  \lim_{n\to \infty}C_{\frac{X_n}{\sqrt{n}}}(\mathbf{t})
= \prod_{j=1}^d \Big(\lim_{n\to \infty}\sum_{x_j\in \mathbb{Z}}\mathrm{e}^{\frac{\mathrm{i}t_jx_j}{\sqrt{n}}}
  \mathrm{Tr}\big[\rho_j^{(n)}(x_j)\big]\Big)
= \prod_{j=1}^d e^{-\frac{t_j^2}{2}}
= e^{-\frac{1}{2} \sum_{j=1}^d t_j^2},
\end{equation*}
$\mathbf{t}=(t_1, t_2,\cdots, t_d)\in \mathbb{R}^d$. Thus, $\frac{X_n}{\sqrt{n}}$ converges in law to
the $d$-dimensional standard Gauss distribution as $n\rightarrow \infty$.
\end{proof}

\section{Links with unitary quantum walk}\label{sec-4}

In the final section, we unveil links between the $d$-dimensional open QBN walk
and the unitary quantum walk recently introduced in \cite{w-w}, which was called the $d$-dimensional QBN walk therein.

As before, $d\geq 2$ is a given positive integer.
We denote by $l^2\big(\mathbb{Z}^d, \mathcal{H}\big)$ the space of square summable functions
defined on $\mathbb{Z}^d$ and valued in $\mathcal{H}$, namely
\begin{equation}\label{eq-4-1}
  l^2\big(\mathbb{Z}^d, \mathcal{H}\big)
= \Big\{ W\colon \mathbb{Z}^d\rightarrow \mathcal{H} \Bigm| \sum_{\mathrm{x}\in \mathbb{Z}^d} \|W(\mathrm{x})\|^2<\infty\Big\},
\end{equation}
where $\|\cdot\|$ is the norm in $\mathcal{H}$.
It is known that $l^2\big(\mathbb{Z}^d, \mathcal{H}\big)$ forms a separable Hilbert space with
the inner product induced by the that in $\mathcal{H}$.

Recall that, for each $n\geq 0$,  $\big\{C_n^{(\varepsilon)}\mid \varepsilon\in \Lambda^d\big\}$ are self-adjoint
operators on $\mathcal{H}$ with properties that:\ $C_n^{(\varepsilon)}C_n^{(\varepsilon')}=0$
for $\varepsilon$, $\varepsilon'\in \Lambda^d$ with $\varepsilon\neq \varepsilon'$; and their sum
$\sum_{\varepsilon\in \Lambda^d}C_n^{(\varepsilon)}$ is a unitary operator on $\mathcal{H}$
(see Subsection~\ref{subsec-2-2} for details).
Using these facts, we can prove that there exists a sequence of unitary operators $\big(\mathcal{U}_n^{(d)}\big)_{n\geq 0}$
on $l^2\big(\mathbb{Z}^d, \mathcal{H}\big)$ such that
\begin{equation}\label{eq-4-2}
  \big(\mathcal{U}_n^{(d)}W\big)(\mathbf{x}) =\sum_{\varepsilon \in \Lambda^d} C_n^{(\varepsilon)}W(\mathbf{x}-\varepsilon),\quad \mathbf{x}\in \mathbb{Z}^d,\; W \in l^2\big(\mathbb{Z}^d, \mathcal{H}\big),\, n\geq 0.
\end{equation}
With these unitary operators as the evolution operators, the authors of \cite{w-w} introduced a unitary quantum walk on the $d$-dimensional integer lattice $\mathbb{Z}^d$
in the following manner.

\begin{definition}\label{def-4-1}\cite{w-w}
The $d$-dimensional QBN walk is a discrete-time unitary quantum walk on the $d$-dimensional integer lattice $\mathbb{Z}^d$ that satisfies the following requirements.
\begin{itemize}
\item Its states are represented by unit vectors in space $l^2\big(\mathbb{Z}^d, \mathcal{H}\big)$.

\item The time evolution of the walk is governed by equation
     \begin{equation}\label{eq-4-3}
      W_{n+1}= \mathcal{U}_n^{(d)}W_n,\quad  n\geq 0,
      \end{equation}
 where $W_n \in l^2\big(\mathbb{Z}^d, \mathcal{H}\big)$ denotes the state of the walk at time $n\geq 0$, and in particular $W_0$ is the initial state of the walk.
\end{itemize}
In that case, the function $\mathbf{x}\mapsto \|W_n(\mathbf{x})\|^2$ on $\mathbb{Z}^d$ is called the probability distribution of the walk at time $n\geq 0$,
while the quantity $\|W_n(\mathbf{x})\|^2$ is the probability to find out the walker
at position $\mathbf{x}\in \mathbb{Z}^d$ and time $n\geq 0$.
\end{definition}

\begin{remark}\label{rem-4-1}
According to (\ref{eq-4-2}), which describes the definition of unitary operators $\mathcal{U}_n^{(d)}$,
the evolution equation of the $d$-dimensional QBN walk can also be represented as
\begin{equation}\label{eq-4-4}
  W_{n+1}(\mathrm{x})
  = \sum_{\varepsilon\in \Lambda^d} C_n^{(\varepsilon)}W_n(\mathrm{x}-\varepsilon),\quad \mathrm{x} \in \mathbb{Z}^d,\, n\geq 0,
\end{equation}
which is more convenient to use.
\end{remark}

As is seen, the $d$-dimensional QBN walk is driven by the sequence $\big(\mathcal{U}_n^{(d)}\big)_{n\geq 0}$
of unitary operators. Hence it belongs to the category of unitary quantum walks. In other words,
it is indeed a unitary quantum walk.
The next result shows links between the $d$-dimensional open QBN walk and the $d$-dimensional QBN walk.

\begin{theorem}\label{thr-4-1}
Let $\big(\widetilde{\omega^{(n)}}\big)_{n\geq 0}$ be the state sequence of the $d$-dimensional open QBN walk,
where $\omega^{(n)}$ is the nucleus of $\widetilde{\omega^{(n)}}$.
Let $\big(W_n\big)_{n\geq 0}$ be the state sequence of the $d$-dimensional QBN walk.
Suppose that
\begin{equation}\label{eq-4-5}
  \omega^{(0)}(\mathrm{x}) = |W_0(\mathrm{x})\rangle\langle W_0(\mathrm{x})|,\quad \mathrm{x}\in \mathbb{Z}^d,
\end{equation}
where $|W_0(\mathrm{x})\rangle\langle W_0(\mathrm{x})|$ is the Dirac operator associated with $W_0(\mathrm{x})$.
Then, for all $n\geq 0$, it holds that
\begin{equation}\label{eq-4-6}
  \mathrm{Tr}\big[\omega^{(n)}(\mathrm{x})\big] = \|W_n(\mathrm{x})\|^2,\quad \mathrm{x}\in \mathbb{Z}^d.
\end{equation}
\end{theorem}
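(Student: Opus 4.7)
The plan is to prove the stronger structural claim that, under the hypothesis \eqref{eq-4-5}, the nucleus $\omega^{(n)}(\mathrm x)$ admits a ``path decomposition'' into rank-one positive operators $|\phi_{\vec\varepsilon}(\mathrm x)\rangle\langle\phi_{\vec\varepsilon}(\mathrm x)|$ indexed by $\vec\varepsilon\in(\Lambda^d)^n$, whose underlying vectors (i) sum to $W_n(\mathrm x)$ and (ii) are pairwise orthogonal in $\mathcal H$. Once both claims are in hand, Pythagoras immediately yields $\|W_n(\mathrm x)\|^2=\sum_{\vec\varepsilon}\|\phi_{\vec\varepsilon}(\mathrm x)\|^2=\mathrm{Tr}[\omega^{(n)}(\mathrm x)]$, which is the target identity \eqref{eq-4-6}.

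To produce the decomposition I would iterate the open-walk recursion \eqref{eq-2-20} together with the unitary-walk recursion \eqref{eq-4-4} down to time zero. Self-adjointness of each $C_k^{(\varepsilon)}$ and the identity $A|u\rangle\langle u|A=|Au\rangle\langle Au|$ applied to the initial data $\omega^{(0)}(\mathrm y)=|W_0(\mathrm y)\rangle\langle W_0(\mathrm y)|$ then give
\begin{equation*}
\omega^{(n)}(\mathrm x)=\sum_{\vec\varepsilon\in(\Lambda^d)^n}|\phi_{\vec\varepsilon}(\mathrm x)\rangle\langle\phi_{\vec\varepsilon}(\mathrm x)|,\qquad W_n(\mathrm x)=\sum_{\vec\varepsilon\in(\Lambda^d)^n}\phi_{\vec\varepsilon}(\mathrm x),
\end{equation*}
with $\phi_{\vec\varepsilon}(\mathrm x)=C_{n-1}^{(\varepsilon_{n-1})}\cdots C_0^{(\varepsilon_0)}W_0\bigl(\mathrm x-\sum_k\varepsilon_k\bigr)$, settling~(i). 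For~(ii) I would first extract from \eqref{eq-2-10} and Definition~\ref{def-2-1} the inter-time commutativity $C_j^{(\alpha)}C_k^{(\beta)}=C_k^{(\beta)}C_j^{(\alpha)}$ for $j\neq k$ (a short consequence of the tensor-product construction via $\mathsf K$). For $\vec\varepsilon\neq\vec\delta$, setting $A=C_{n-1}^{(\varepsilon_{n-1})}\cdots C_0^{(\varepsilon_0)}$ and $B=C_{n-1}^{(\delta_{n-1})}\cdots C_0^{(\delta_0)}$, self-adjointness gives $A^*=C_0^{(\varepsilon_0)}\cdots C_{n-1}^{(\varepsilon_{n-1})}$, and the commutativity just noted lets one rearrange $A^*B$ as $\prod_{k=0}^{n-1}C_k^{(\varepsilon_k)}C_k^{(\delta_k)}$; picking an index $k_0$ with $\varepsilon_{k_0}\neq\delta_{k_0}$ and invoking the property $C_{k_0}^{(\varepsilon_{k_0})}C_{k_0}^{(\delta_{k_0})}=0$ recorded after Definition~\ref{def-2-1} forces $A^*B=0$, so that $\langle\phi_{\vec\varepsilon}(\mathrm x),\phi_{\vec\delta}(\mathrm x)\rangle=\langle W_0(\cdot),A^*B\,W_0(\cdot)\rangle=0$.

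The main obstacle is step~(ii). A naive induction on $n$ carrying forward only the decomposition together with the cross inner products of path vectors at the same time does not close: when one tries to extend an inductive orthogonality $\langle\phi,\phi'\rangle=0$ through the step map, one is left evaluating $\langle\phi,(C_n^{(\eta)})^2\phi'\rangle$, which need not vanish. The explicit product formula for $\phi_{\vec\varepsilon}$ combined with the inter-time commutativity of the $C_k^{(\cdot)}$'s is therefore essential; extracting that commutativity cleanly from \eqref{eq-2-10} and Definition~\ref{def-2-1} is the only nonroutine ingredient of the argument.
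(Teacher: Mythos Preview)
Your proposal is correct and follows essentially the same approach as the paper: iterate both recursions \eqref{eq-2-20} and \eqref{eq-4-4} to obtain path expansions indexed by $(\Lambda^d)^n$, then use the inter-time commutativity of the $C_k^{(\varepsilon)}$ together with $C_k^{(\varepsilon)}C_k^{(\varepsilon')}=0$ for $\varepsilon\neq\varepsilon'$ to establish the orthogonality that makes the traces and squared norms match term by term. The only cosmetic difference is that the paper applies the Pythagorean identity $\big\|\sum_\varepsilon C_n^{(\varepsilon)}\xi_\varepsilon\big\|^2=\sum_\varepsilon\|C_n^{(\varepsilon)}\xi_\varepsilon\|^2$ inductively at each time step, whereas you expand fully and prove $A^*B=0$ globally; the ingredients and the logic are the same.
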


\begin{proof}
We first recall some algebraic and analytical properties of operators $C_n^{(\varepsilon)}$.
As is indicated above, for each $n\geq 0$, $\big\{C_n^{(\varepsilon)}\mid \varepsilon\in \Lambda^d\big\}$
are self-adjoint operators on $\mathcal{H}$ with the property that $C_n^{(\varepsilon)}C_n^{(\varepsilon')}=0$
for $\varepsilon$, $\varepsilon'\in \Lambda^d$ with $\varepsilon\neq \varepsilon'$, which implies that
\begin{equation}\label{eq-4-7}
  \Big\|\sum_{\varepsilon\in \Lambda^d}C_n^{(\varepsilon)}\xi_{\varepsilon}\Big\|^2
  = \sum_{\varepsilon\in \Lambda^d}\big\|C_n^{(\varepsilon)}\xi_{\varepsilon}\big\|^2
\end{equation}
whenever $\big\{\,\xi_{\varepsilon} \mid \varepsilon\in \Lambda^d\,\big\} \subset \mathcal{H}$.
And moreover, it follows from (\ref{eq-2-10}) and Definition~\ref{def-2-1}
that all the operators $\big\{\,C_n^{(\varepsilon)} \mid \varepsilon\in \Lambda^d,\, n\geq 0\,\big\}$ form a commutative family.

Now we consider (\ref{eq-4-6}). Clearly, it holds for $n=0$. In the following, we let $n\geq 1$ and $\mathrm{x}\in \mathbb{Z}^d$.
Using (\ref{eq-4-4}) and (\ref{eq-4-7}) gives
\begin{equation*}
  \|W_n(\mathrm{x})\|^2
    =\sum_{\varepsilon^{(n-1)}\in \Lambda^d}\big\|C_{n-1}^{(\varepsilon^{(n-1)})}W_{n-1}\big(\mathrm{x}-\varepsilon^{(n-1)}\big)\big\|^2,
\end{equation*}
where, by using the commutativity of the family
$\big\{\,C_k^{(\varepsilon)} \mid \varepsilon\in \Lambda^d,\, k\geq 0\,\big\}$
and (\ref{eq-4-7}), we have
\begin{equation*}
  \big\|C_{n-1}^{(\varepsilon^{(n-1)})}W_{n-1}\big(\mathrm{x}-\varepsilon^{(n-1)}\big)\big\|^2
  = \sum_{\varepsilon^{(n-2)}\in \Lambda^d}\big\|C_{n-1}^{(\varepsilon^{(n-1)})}
    C_{n-2}^{(\varepsilon^{(n-2)})}W_{n-2}\big(\mathrm{x}-\varepsilon^{(n-1)}-\varepsilon^{(n-2)}\big)\big\|^2.
\end{equation*}
Thus
\begin{equation*}
 \|W_n(\mathrm{x})\|^2
 = \sum_{\varepsilon^{(n-1)}\in \Lambda^d}\sum_{\varepsilon^{(n-2)}\in \Lambda^d}\big\|C_{n-1}^{(\varepsilon^{(n-1)})}
    C_{n-2}^{(\varepsilon^{(n-2)})}W_{n-2}\big(\mathrm{x}-\varepsilon^{(n-1)}-\varepsilon^{(n-2)}\big)\big\|^2.
\end{equation*}
It then follows by the induction that
\begin{equation}\label{eq-4-8}
 \|W_n(\mathrm{x})\|^2
 = \sum_{\varepsilon^{(n-1)}\in \Lambda^d}\cdots \sum_{\varepsilon^{(0)}\in \Lambda^d}
 \big\|C_{n-1}^{(\varepsilon^{(n-1)})}\cdots C_{0}^{(\varepsilon^{(0)})}
 W_{0}\big(\mathrm{x}-\varepsilon^{(n-1)}- \cdots - \varepsilon^{(0)}\big)\big\|^2.
\end{equation}
Similarly, repeatedly using the evolution relation (\ref{eq-2-20}) of the $d$-dimensional open QBN walk yields that
\begin{equation*}
  \omega^{(n)}(\mathrm{x})
  = \sum_{\varepsilon^{(n-1)}\in \Lambda^d}\cdots \sum_{\varepsilon^{(0)}\in \Lambda^d}
    C_{n-1}^{(\varepsilon^{(n-1)})}\cdots C_{0}^{(\varepsilon^{(0)})}
    \omega^{(0)}\big(\mathrm{x}-\varepsilon^{(n-1)}-\cdots - \varepsilon^{(0)}\big)
    C_{0}^{(\varepsilon^{(0)})}\cdots C_{n-1}^{(\varepsilon^{(n-1)})}.
\end{equation*}
Taking the trace gives
\begin{equation}\label{eq-4-9}
\begin{split}
 \mathrm{Tr}&\big[ \omega^{(n)}(\mathrm{x})\big]\\
  &= \sum_{\varepsilon^{(n-1)}\in \Lambda^d}\cdots \sum_{\varepsilon^{(0)}\in \Lambda^d}
   \mathrm{Tr}\big[ C_{n-1}^{(\varepsilon^{(n-1)})}\cdots C_{0}^{(\varepsilon^{(0)})}
    \omega^{(0)}\big(\mathrm{x}-\varepsilon^{(n-1)}-\cdots - \varepsilon^{(0)}\big)
    C_{0}^{(\varepsilon^{(0)})}\cdots C_{n-1}^{(\varepsilon^{(n-1)})}\big].
\end{split}
\end{equation}
On the other hand,  for $\varepsilon^{(n-1)}\in \Lambda^d$, $\cdots$, $\varepsilon^{(0)}\in \Lambda^d$, by using the assumption (\ref{eq-4-5}) we find
\begin{equation*}
\begin{split}
&\big\|C_{n-1}^{(\varepsilon^{(n-1)})}\cdots C_{0}^{(\varepsilon^{(0)})}
          W_{0}\big(\mathrm{x}-\varepsilon^{(n-1)}- \cdots - \varepsilon^{(0)}\big)\big\|^2\\
  &= \mathrm{Tr}\big[\big| C_{n-1}^{(\varepsilon^{(n-1)})}\cdots C_{0}^{(\varepsilon^{(0)})} W_{0}\big(\mathrm{x}-\varepsilon^{(n-1)}- \cdots - \varepsilon^{(0)}\big)\big\rangle \big\langle C_{n-1}^{(\varepsilon^{(n-1)})}\cdots C_{0}^{(\varepsilon^{(0)})} W_{0}\big(\mathrm{x}-\varepsilon^{(n-1)}- \cdots - \varepsilon^{(0)}\big)\big|\big]\\
  &= \mathrm{Tr}\big[ C_{n-1}^{(\varepsilon^{(n-1)})}\cdots C_{0}^{(\varepsilon^{(0)})}
           \omega^{(0)}\big(\mathrm{x}-\varepsilon^{(n-1)}-\cdots - \varepsilon^{(0)}\big)
          C_{0}^{(\varepsilon^{(0)})}\cdots C_{n-1}^{(\varepsilon^{(n-1)})}\big],
\end{split}
\end{equation*}
which, together with (\ref{eq-4-8}) and (\ref{eq-4-9}), implies that $\mathrm{Tr}\big[ \omega^{(n)}(\mathrm{x})\big] = \|W_n(\mathrm{x})\|^2$.
\end{proof}

\begin{remark}\label{rem-4-2}
It should be mentioned that under the conditions given in Theorem~\ref{thr-4-1}, one has, in general, that
\begin{equation}\label{eq-4-10}
  \omega^{(n)}(\mathrm{x}) \neq |W_n(\mathrm{x})\rangle\langle W_n(\mathrm{x})|,\quad \mathrm{x}\in \mathbb{Z}^d,\, n\geq 1.
\end{equation}
This suggests that the $d$-dimensional open QBN walk is mathematically different from the
$d$-dimensional QBN walk.
\end{remark}

However, from a perspective of physical realization, transition may happen between
these two walks. As an immediate consequence of Theorem~\ref{thr-4-1}, the next corollary
describes the limit case when such transition happens.

\begin{corollary}\label{coro-4-2}
Let the initial state $\widetilde{\omega^{(0)}}$ of the $d$-dimensional open QBN walk be such that
\begin{equation}\label{eq-4-11}
  \widetilde{\omega^{(0)}} = \sum_{\mathrm{x}\in \mathbb{Z}^d}|\delta_{\mathrm{x}}\rangle\langle \delta_{\mathrm{x}}|
  \otimes |W_0(\mathrm{x})\rangle\langle W_0(\mathrm{x})|,
\end{equation}
where $W_0$ is the initial state of the $d$-dimensional QBN walk. Then, the $d$-dimensional open QBN walk has a limit probability distribution
if and only if the $d$-dimensional QBN walk
has a limit probability distribution. In that case, their limit probability distributions
are identical.
\end{corollary}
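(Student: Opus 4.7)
The plan is to reduce this corollary to a direct application of Theorem~\ref{thr-4-1}. First I would unpack the hypothesis on the initial state: comparing the given expression for $\widetilde{\omega^{(0)}}$ with the canonical form $\widetilde{\omega^{(0)}} = \sum_{\mathrm{x}\in \mathbb{Z}^d}|\delta_{\mathrm{x}}\rangle\langle\delta_{\mathrm{x}}|\otimes \omega^{(0)}(\mathrm{x})$ from Definition~\ref{def-2-3}, one reads off (using the fact that the vectors $\delta_{\mathrm{x}}$ are mutually orthogonal) that the nucleus of the initial state is
\begin{equation*}
  \omega^{(0)}(\mathrm{x}) = |W_0(\mathrm{x})\rangle\langle W_0(\mathrm{x})|,\quad \mathrm{x}\in \mathbb{Z}^d.
\end{equation*}
This is precisely the hypothesis (\ref{eq-4-5}) of Theorem~\ref{thr-4-1}.

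Next I would invoke Theorem~\ref{thr-4-1} to obtain
\begin{equation*}
  \mathrm{Tr}\big[\omega^{(n)}(\mathrm{x})\big] = \|W_n(\mathrm{x})\|^2,\quad \mathrm{x}\in \mathbb{Z}^d,\, n\geq 0.
\end{equation*}
By Definitions~\ref{def-2-3} and~\ref{def-4-1}, the left-hand side is the value at $\mathrm{x}$ of the probability distribution of the $d$-dimensional open QBN walk at time $n$, while the right-hand side is the value at $\mathrm{x}$ of the probability distribution of the $d$-dimensional QBN walk at time $n$. Hence the two walks produce identical probability distributions on $\mathbb{Z}^d$ at every finite time $n$.

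Finally, since the sequences of probability distributions agree pointwise for every $n$, any reasonable notion of limit (pointwise, in total variation, or weak convergence after the $\sqrt{n}$-scaling as in Theorem~\ref{thr-3-5}) applied to one sequence is automatically inherited by the other, with the same limiting distribution. This delivers the ``if and only if'' as well as the identity of the two limit distributions. There is essentially no obstacle here beyond being precise about the mode of convergence; the substantive work has already been carried out in Theorem~\ref{thr-4-1}, and the corollary is really an observation that the pointwise identification at each time step survives the passage to the limit.
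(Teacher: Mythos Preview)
Your proposal is correct and follows exactly the route the paper intends: the corollary is stated as an immediate consequence of Theorem~\ref{thr-4-1}, and your unpacking of the initial state to recover hypothesis~(\ref{eq-4-5}), followed by the identification of the two probability distributions at every time $n$, is precisely what is needed.
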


\section*{Acknowledgement}

This work is partly supported by National Natural Science Foundation of China (Grant No. 11861057).

\end{document}